\documentclass[11pt,reqno]{amsart}
\usepackage[utf8]{inputenc}
\usepackage{amsmath,amssymb}
\usepackage{hyperref}
\usepackage{mathtools}
\usepackage{cite}
\usepackage{tikz}
\usetikzlibrary{automata}
\usepackage{amsmath}
\usepackage{mathrsfs}

\numberwithin{equation}{section}
\newtheorem{thm}{Theorem}[section]
\newtheorem{cor}[thm]{Corollary}
\newtheorem{lem}[thm]{Lemma}
\newtheorem{prop}[thm]{Proposition}
{\theoremstyle{remark}
\newtheorem{rmk}[thm]{Remark}}
{\theoremstyle{example}
\newtheorem{example}[thm]{Example}}
\newcommand{\abs}[1]{\left\vert#1\right\vert}
{\theoremstyle{definition}

\newcommand{\malcev}{\mathbin{\hbox{$\bigcirc$\rlap{\kern-8.25pt\raise0,50pt\hbox{${\tt
  m}$}}}}}
\newcommand{\smalcev}{\mathbin{\hbox{$\bigcirc$\rlap{\kern-7pt\raise0,30pt\hbox{${\tt
  m}$}}}}}
\newcommand{\im}{\mathop{\mathrm{Im}}\nolimits}
\newcommand{\Aug}{\mathop{\mathrm{Aug}}\nolimits}
\newcommand{\J}{\mathrel{\mathscr J}} 
\newcommand{\R}{\mathrel{\mathscr R}} 
\newcommand{\eL}{\mathrel{\mathscr L}} 
\newcommand{\HH}{\mathrel{\mathscr H}}
\newcommand{\K}{\mathrel{\mathscr K}}
\newcommand{\rk}{\mathop{\mathrm{rk}}\nolimits}
\newcommand{\End}{\mathop{\mathrm{End}}\nolimits}
\newcommand{\rank}{\mathop{\mathrm{rank}}\nolimits}
\newcommand{\init}{\mathop{\mathrm{init}}\nolimits}
\newcommand{\ter}{\mathop{\mathrm{ter}}\nolimits}

\begin{document}
\title[Simplicity of augmentation submodules]{Simplicity of augmentation submodules for transformation monoids}
\author{M.H. Shahzamanian}
\author{B. Steinberg}
\address{M.H. Shahzamanian\\ Centro de Matem\'atica e Departamento de Matem\'atica, Faculdade de Ci\^{e}ncias,
Universidade do Porto, Rua do Campo Alegre, 687, 4169-007 Porto,
Portugal}
\email{m.h.shahzamanian@fc.up.pt}
\address{B. Steinberg  \\ Department of Mathematics\\
    City College of New York\\
    Convent Avenue at 138th Street\\
    New York, New York 10031\\
    USA.}
\email{bsteinberg@ccny.cuny.edu}
\subjclass[2010]{20M30,20M20,20M25}
\keywords{transformation monoids, monoid representation theory, graphs, simplicial complexes, posets, simple modules}

\begin{abstract}
For finite permutation groups, simplicity of the augmentation submodule is equivalent to $2$-transitivity over the field of complex numbers. We note that this is not the case for transformation monoids. We characterize the finite transformation monoids whose augmentation submodules are simple for a field $\mathbb{F}$ (assuming the answer is known for groups, which is the case for $\mathbb C$, $\mathbb R$, and $\mathbb Q$) and provide many interesting and natural examples such as endomorphism monoids of connected simplicial complexes, posets, and graphs (the latter with simplicial mappings).
\end{abstract}
\maketitle

\section{Introduction}
Over the past two decades there has been a resurgence of interest in the representation theory of finite monoids coming from a number of different sources. The main catalyst was a paper of Bidigare, Hanlon and Rockmore applying monoid representation theory to analyzing finite state Markov chains~\cite{BHR}.  This was followed by work of Brown and Diaconis~\cite{DiaconisBrown1} and then many others~\cite{Brown1,Brown2,bjorner2,GrahamChung,Saliolaeigen,sandpile,AyyerKleeSchilling,ayyer_schilling_steinberg_thiery.2013,randomwalksrings}.  These developments are discussed in the second author's recent book~\cite[Chapter~14]{Ben-Rep-Monoids-2016}. Connections with fast Fourier transforms and data analysis can be found in~\cite{Malandro1,Malandro2,Malandro3}.

There have also been a number of applications of the representation theory of monoids to studying finite dimensional algebras arising in discrete geometry from hyperplane arrangements, oriented matroids and CAT(0) cube complexes~\cite{DiaconisBrown1,Saliolahyperplane,MSS,ourmemoirs}.  Papers studying connections between the representation theory of finite monoids and the representation theory of finite dimensional algebras and quivers include~\cite{Putcharep3,DO,Saliola,rrbg,globaltn,itamar1}.  Applications to algebraic combinatorics and descent algebras can be found in~\cite{BidigareThesis,Brown2,SaliolaDescent,Hsiao,rrbg,MSS,ourmemoirs,Denton,Jtrivialpaper,BergeronSaliola}.

Representation theory of finite monoids can be used as a tool to study finite monoids acting on finite sets, cf.~\cite{Ben-Transformation-Monoids-2010} and~\cite[Chapter~13]{Ben-Rep-Monoids-2016}.  A transformation monoid has an associated transformation module (analogous to the way a permutation group has an associated permutation module).  When analyzing Markov chains using monoid representation theory, it is precisely transformation modules that are used.
Another source of applications of transformation modules is to automata theory~\cite{EilenbergA}.  Associated to any finite state automaton is a transformation monoid.  Representation theoretic aspects of the corresponding transformation module can often be exploited to study the automaton, cf.~\cite{Perrincomplred,berstelperrinreutenauer}.  This has particularly, been the case for study of the \v{C}ern\'y conjecture~\cite{cerny}, an over 60 year old problem in automata theory.  In mathematical terms, it asserts that if $A$ is a set of mappings on an $n$-element set such that the monoid generated by $A$ contains a constant mapping, then there is a product of at most $(n-1)^2$ elements of $A$ (with repetitions allowed) that is a constant map; see~\cite{VolkovLata} for a nice survey.  Many of the deepest results concerning the \v{C}ern\'y conjecture exploit the transformation module and its augmentation submodule, that is, the submodule consisting of those vectors whose coordinates sum to zero.  See, for instance,~\cite{Kari,Pincerny,rystsov1,dubuc,primecycle,averagingjournal,bealperrinnew,synchgroups}.

More precisely, if $(M,\Omega)$ is a finite transformation monoid and $\mathbb{F}$ a field, then $\mathbb F\Omega$ is the transformation module, where the action of $M$ on $\Omega$ is extended linearly.  The augmentation submodule $\Aug(\mathbb F\Omega)$ consists of those formal linear combinations of elements of $\Omega$ whose coefficients sum to zero.
It is a classical result going back to Burnside that if $G$ is a transitive permutation group on $\Omega$, then $\Aug(\mathbb C\Omega)$ is simple if and only if $G$ is $2$-transitive.  The permutation $(G,\Omega)$ is $2$-homogeneous, that is, acts transitively on unordered pairs of elements of $\Omega$, if and only if $\Aug(\mathbb R \Omega)$ is simple, cf.~\cite{Ar-Ca-St,cameron}.  Motivated by the results of~\cite{synchgroups}, the second author asked John Dixon about when $\Aug(\mathbb Q\Omega)$ is simple.  Dixon made partial progress on this question in~\cite{Dix}, where he showed that such permutation groups are primitive of either affine type or almost simple and classified the examples of affine type.  He also showed, that if $\Aug(\mathbb Q\Omega)$ is simple, then $G$ is $3/2$-transitive.  The classification of $3/2$-transitive groups was obtained in~\cite{Bam} and, in particular, the classification of permutation groups with $\Aug(\mathbb Q\Omega)$ simple was completed in~\cite[Corollary~1.6]{Bam}.

In a number of the applications of transformation modules to the study of transformation monoids, in particular to synchronization and bounding the lengths of synchronizing words~\cite{synchgroups,Ar-Ca-St,Ben-Transformation-Monoids-2010}, the simplicity of the augmentation module plays an important role. It has been a challenging question to determine when the augmentation is simple and to clarify  what is the relationship between simplicity of the augmentation submodule for finite transformation monoids that are not groups and $2$-transitivity.

 In this paper, we characterize when the augmentation module of a transformation monoid is simple over a field, assuming that the answer is known for permutation groups, as is the case for the fields $\mathbb C$, $\mathbb R$ and $\mathbb Q$.  We show that it is not the case that all $2$-transitive transformation monoids have simple augmentation modules over $\mathbb C$;  it turns out there is an extra condition that the incidence matrix of a certain set system should have full rank.  We also show that $2$-transitivity is not necessary either.  In the process we show that a plethora of naturally arising transformation monoids  in combinatorics have simple augmentation modules over any field. Examples include endomorphism monoids of connected simplicial complexes and connected posets (or equivalently, monoids of continuous self-maps of finite, connected $T_0$ topological spaces). In particular, if $\Gamma$ is a connected but not complete graph, then the monoid of all simplicial endomorphisms of $\Gamma$ has a simple augmentation module over any field but is not $2$-transitive.
Our techniques involve a mixture of monoid representation theory and combinatorics.

The paper is organized as follows.  We being by recalling background on monoids and their representation theory, as well as on combinatorial structures like finite simplicial complexes, directed graphs and posets, so as to make the paper accessible to as broad an audience as possible.  We then present in the following section our characterization of transformation monoids with a simple augmentation module.  This is followed by a section of examples of transitive transformation monoids with and without simple augmentation modules.  A key role is played by edge transitive monoids of simplicial endomorphisms of a connected graph and by set systems and partial orders.  The final section considers the case of $0$-transitive transformation monoids, or equivalently, transitive partial transformation monoids. Here key examples included endomorphism monoids of meet semilattices and certain Rees matrix semigroups.


\section{Preliminaries}
\subsection{Monoids}
For standard notation and terminology relating to monoids, we refer the reader to~\cite{Alm,Cli-Pre,Rho-Ste}.
Let $M$ a finite monoid. Let $a,b\in M$. We say that $a\R b$ if $aM = bM$, $a\eL b$ if $Ma = Mb$ and $a\HH b$ if $a\R b$ and $a\eL b$. Also, we say that $a\J b$, if $MaM = MbM$.
The relations $\R,\eL$, $\HH$ and $\J$ are Green relations and all of them are equivalence relations were first introduced by Green~\cite{Gre}.
We call $R_a,L_a,H_a$ and $J_a$, respectively, the $\R,\eL,\HH$ and $\J$-class containing $a$.
An important property of finite monoids is the stability property that $J_m\cap Mm = L_m$ and $J_m\cap mM = R_m$, for every $m \in M$.
For $\J$-classes $J_a$ and $J_b$, we can define the partial order $\leq$ as follows:
$$MaM\subseteq MbM\mbox{ if and only if }J_a\leq J_b.$$

 An element $e$ of $M$ is called idempotent if $e^2 = e$. The set of all idempotents of $M$ is denoted by $E(M)$; more generally, for any $X\subseteq M$, we put $E(X)=X\cap E(M)$.
An idempotent $e$ of $M$ is the identity of the monoid $eMe$. The group of units $G_e$ of $eMe$ is called the maximal subgroup of $M$ at $e$. Note that $G_e=H_e$.

An element $m$ of $M$ is called (von Neumann) regular if there exists an element $n\in M$ such that $mnm=m$. Note that an element $m$ is regular if and only if $m\eL e$, for some $e\in E(M)$, if and only if $m\R f$, for some $f\in E(M)$. A $\J$-class $J$ is regular if all its elements are regular, if and only if $J$ has an idempotent, if and only if $J^2\cap J\neq\emptyset$.  Note that if $N$ is a submonoid of $M$ and $a,b\in N$ are regular in $N$, then $a\K b$ in $N$ if and only if $a\K b$ in $M$
where $\K$ is any of $\R, \eL$ or $\HH$ (\cite[Proposition A.1.16]{Rho-Ste}).

Let $G$ be a group, $n$ and $m$ be integers and $P$ be an $m\times n$ matrix with entries in $G\cup\{0\}$.
The Rees matrix semigroup $\mathcal{M}^{0}(G, n,m;P)$ is the set of all triples $(i,g,j)$ where $g\in G$, $1\leq i \leq n$ and $1\leq j\leq m$, together with  $0$, and the following binary operation between non-zero elements
\begin{equation*}
(i,g,j)(i',g',j')  = \begin{cases}
  (i,gp_{ji'}g',j')& \text{if}\ p_{ji'}\neq 0;\\
  0& \text{otherwise},
\end{cases}
\end{equation*}
for every $(i,g,j),(i',g',j')\in \mathcal{M}^{0}(G, n,m;P)$ where $P=(p_{ij})$. The Rees matrix semigroup $\mathcal{M}^{0}(G, n,m;P)$ is regular if and only if each row and each column of $P$ contains a non-zero entry, in which case all non-zero elements are $\J$-equivalent.

\subsection{$M$-sets}
An $M$-set, for a monoid $M$, consists of a set $\Omega$ together with a mapping $M \times \Omega \rightarrow \Omega$, written $(m, \omega) \mapsto m\omega$ and
called an action, such that:
\begin{enumerate}
\item $1\omega = \omega$;
\item $m_2(m_1\omega) = (m_2m_1)\omega$,
\end{enumerate}
for every $\omega\in\Omega$ and $m_1,m_2\in M$.
The pair $(M,\Omega)$ is called a transformation monoid if $M$ acts faithfully on the $\Omega$. We write $T_{\Omega}$ for the full transformation monoid on $\Omega$, that is, the monoid of all self-maps of $\Omega$. Transformation monoids on $\Omega$ amount
to submonoids of $T_{\Omega}$.
We say that $M$ is transitive on $\Omega$ if $M\omega = \Omega$ for all $\omega\in\Omega$. The rank of $m \in M$ is defined by $\rk(m) = \abs{m\Omega}$.
The rank is constant on $\J$-classes and, for a transformation monoid, the minimum rank is attained precisely on the minimal ideal $I(M)$.

A non-empty subset $\Delta$ of $\Omega$ is $M$-invariant if $M\Delta \subseteq \Delta$.
The set $\Omega^2$ is a finite transformation monoid via $m(\alpha, \beta) = (m\alpha, m\beta)$, for every $(\alpha, \beta)\in\Omega^2$. Let $\Delta = \{(\alpha, \alpha) \in \Omega^2 \mid \alpha \in \Omega\}$. One says that $M$ acts $2$-transitively on $\Omega$, if for every $(\alpha,\beta),(\alpha',\beta')\in \Omega^2\setminus \Delta$ there exists an element $m\in M$ such that $m(\alpha,\beta)=(\alpha',\beta')$.
We also say that $M$ is $0$-transitive on $\Omega$, if for a necessarily unique $\omega_0\in \Omega$, $M\omega_0 = \{\omega_0\}$ and $M\omega =\Omega$, for all $\omega\in\Omega\setminus\{\omega_0\}$. Traditionally, one uses $0$ instead of $\omega_0$ for the fixed point of $M$ but to avoid confusion with the $0$ of associated vector spaces, we write $\omega_0$. But we still use the term "$0$-transitive".

A congruence on an $M$-set $\Omega$ is an equivalence relation $\equiv$ such that $\alpha\equiv\beta$ implies $m\alpha\equiv m\beta$ for all $\alpha,\beta \in\Omega$ and $m \in M$. In this case, the quotient $\Omega/{\equiv}$ becomes an $M$-set in the natural way and the quotient map $\Omega\rightarrow \Omega/{\equiv}$ is a morphism. A transformation monoid $(M,\Omega)$ is primitive if it admits no non-trivial proper congruences.
We refer the reader for more details on this concept to~\cite{Ben-Transformation-Monoids-2010}.

\subsection{Transformation modules and representations of monoids}
Let $(M,\Omega)$ be a finite transformation monoid and $\mathbb{F}$ a field. By extending the action of $M$ on $\Omega$ linearly, as the basis, $\mathbb{F}\Omega$ is a left $\mathbb{F}M$-module where $\mathbb{F}M$ is the monoid algebra of $M$ on $\mathbb{F}$. It is the transformation module associated with the action. Also, we have that
$\mathbb{F}^\Omega = \{f\colon  \Omega \rightarrow \mathbb{F}\}$ is a right $\mathbb{F}M$-module by putting $(fm)(\omega) = f(m\omega)$.
We identify $\mathbb{F}^\Omega$ as the dual space to $\mathbb{F}\Omega$ in the natural way.
The map $\eta\colon  \mathbb{F}\Omega \rightarrow \mathbb{F}$ sending each element of $\Omega$ to $1$ is called the augmentation map and hence we write $\ker \eta = \Aug(\mathbb{F}\Omega)$ which is the augmentation submodule of $\mathbb{F}\Omega$. Let $\mathcal{W}$ be an $\mathbb{F}M$-submodule of $\mathbb{F}^\Omega$. The $\mathbb{F}M$-submodule
$\mathcal{W}^{\bot}$ of $\mathbb{F}\Omega$ is the null space of $\mathcal{W}$ as follows:
$$\{v \in \mathbb{F}\Omega\mid w(v) = 0,\mbox{ for every }w\in\mathcal{W}\}.$$

We recall the map $1_B\in \mathbb{F}^\Omega$, for a subset $B\in \Omega$, defined as follows:
\begin{equation*}
1_B(x) = \begin{cases}
  1& \text{if}\ x\in B;\\
  0& \text{otherwise},
\end{cases}
\end{equation*}
for every $x\in\Omega$.

Let $S$ be a simple $\mathbb{F}M$-module. An idempotent $e \in E(M)$ is called an apex for $S$ if $eS \neq 0$ and $I_eS = 0$ where $I_e = eMe \setminus G_e$. Let us recall the notation $I(e) = \{m \in M \mid e \not\in MmM\}$.
If $S$ is simple with an apex $e$, then $I(e) = \{m \in M \mid mS = 0\}$ and $f\in E(M)$ is an apex for $S$ if and only if $MeM = MfM$ (\cite[Proposition 5.4]{Ben-Rep-Monoids-2016}). In general, every simple $\mathbb{F}M$-module has an apex (unique up to $\J$-equivalence) and there is a bijection between isomorphism classes of simple $\mathbb{F}M$-modules with apex $e \in E(M)$ and isomorphism classes of simple $\mathbb{F}G_e$-modules~\cite[Theorem 5.5]{Ben-Rep-Monoids-2016}.
If $V$ is a module over a ring $R$ and $X\subseteq V$, then $\langle X\rangle_R$ denotes the $R$-module generated by $X$.
We refer the reader for more details to~\cite{Hof-Kun,Ben-Rep-Monoids-2016}.

\subsection{Graphs and simplicial complexes}
A graph $\Gamma$ is a pair of a set $V(\Gamma)$ of vertices and a set $E(\Gamma)$ of unordered pairs from $V(\Gamma)$ called edges. If $e=\{v_1,v_2\}$, for some vertices $v_1,v_2\in V(\Gamma)$ and edge $e\in E(\Gamma)$, then we say that there is an edge between $v_1$ and $v_2$.  A graph $\Gamma'$ is a subgraph of $\Gamma$ if $V(\Gamma') \subseteq V(\Gamma)$ and $E(\Gamma') \subseteq E(\Gamma)$.
A path in the graph $\Gamma$ is a non-empty alternating sequence $v_0v_1 \ldots v_k$ of vertices in $\Gamma$ such that $\{v_{i},v_{i+1}\}\in E(\Gamma)$, for all $i<k$.
If $v_0=v_k$ then the path is a cycle.
The graph $\Gamma$ is connected if for every distinct vertices of $\Gamma$, there exists a path between them.  The degree of a vertex $v\in V(\Gamma)$ is the number of edges incident on $v$.
A graph $\Gamma$ is said to be a star graph of order $n$ if it is a tree on $n$ vertices with one vertex having degree $n-1$ and the other $n-1$ having degree $1$ and a vertex in the graph $\Gamma$ is said to be a star vertex if its degree is equal to $n-1$. A graph $\Gamma$ is said complete if for all distinct vertices $v_1,v_2\in V(\Gamma)$, there exists an edge between them.

A digraph graph $\Delta$ is a graph $(V(\Delta),E(\Delta))$ together with an orientation of each edge, that is two maps $\init\colon E(\Delta)\rightarrow V(\Delta)$ and $\ter\colon E(\Delta)\rightarrow V(\Delta)$ assigning to every edge $e$ an initial vertex $\init(e)$ and a terminal vertex
$\ter(e)$. A directed path in the digraph graph $\Delta$ is a non-empty alternating sequence $v_0v_1 \ldots v_k$ of vertices in $\Delta$ such that there exist edges $e_1,\ldots, e_k$ with $\init(e_{i}) = v_{i-1}$ and $\ter(e_{i})=v_{i}$, for all $1\leq i\leq k$.
If $v_0=v_k$ then the directed path is a directed cycle.
An acyclic digraph is a finite digraph with no directed cycles.
We refer the reader for more details for the concept in graph theory to~\cite{Bon-Mur,Die}.

Let $\Omega$ be a set. A simplicial complex $\mathcal{K}$ on the set $\Omega$ is a pair $(\Omega,\mathcal{F})$ such that the set $\mathcal{F} $ satisfies the following conditions:
\begin{enumerate}
\item $\mathcal{F}\subseteq \mathcal{P}(\Omega)$;
\item if $X\subseteq Y$ and $Y\in \mathcal{F}$, then $X\in \mathcal{F}$;
\item $\bigcup_{X\in \mathcal{F}} X=\Omega$.
\end{enumerate}
It is clear that $\mathcal{F}$ contains all subsets $\{\omega\}$, for $\omega\in\Omega$. The pair $(\Omega,\mathcal{P}(\Omega))$ is called a simplex.

Let $\dim(\mathcal{K})=\max\{\abs{X}-1\mid X\in \mathcal{F}\}$. We define the graph $\mathcal{K}^1$ as follows:
\begin{enumerate}
\item $V(\mathcal{K}^1)=\Omega$;
\item $\{\omega,\omega'\}\in E(\mathcal{K}^1)$ if and only if $\{\omega,\omega'\}\in \mathcal{F}$.
\end{enumerate}
We say that the simplicial complex $\mathcal{K}$ is connected if the graph $\mathcal{K}^1$ is connected.
If $(\Omega,\mathcal{F})$ and $(\Omega',\mathcal{F}')$ are simplicial complexes, a map $f\colon \Omega\rightarrow\Omega'$ is a simplicial map if $X\in \mathcal{F}$, then $f(X)\in \mathcal{F}'$. The image of a connected simplicial complex under a simplicial map is connected.

Let $\Gamma=(V(\Gamma), E(\Gamma))$ be a graph. The graph $\Gamma$ is a simplicial complex with $\dim(\Gamma)\leq 1$.
Also, for a map $f\colon V(\Gamma)\rightarrow V(\Gamma)$, the map $f$ is a simplicial map if $\{v_1,v_2\}\in E(\Gamma)$ then $f(v_1)=f(v_2)$ or $\{f(v_1),f(v_2)\}\in E(\Gamma)$. A simplicial map $f$ on a digraph preserves orientation if whether $e$ is an edge with $f(e)$ an edge, then $\init(f(e))=f(\init(e))$ and $\ter(f(e))=f(\ter(e))$.

\subsection{Set systems and incidence matrices}
A set system on $\Omega$ is a collection $\mathcal{E}$ of subsets of $\Omega$ and the incidence matrix of $\mathcal{E}$, $I(\mathcal{E})$, is the $\Omega\times\mathcal{E}$ matrix defined as follows:
\begin{equation*}
I(\mathcal{E})_{\omega,B}= \begin{cases}
 1& \text{if}\ \omega\in B; \\
 0& \text{otherwise},
\end{cases}
\end{equation*}
for every $\omega\in\Omega$ and $B\in \mathcal{E}$. It is known that random incidence matrices of set systems with $\abs{\Omega}=n=\abs{\mathcal{E}}$ are invertible with probability tending to $1$ as $n\rightarrow \infty$~\cite{Kom}. If $\{\alpha\}\in \mathcal{E}$, for all $\alpha\in\Omega$, then $\rank I(\mathcal{E})=\abs{\Omega}$.

\subsection{Posets}
A partial order is a binary relation $\leq$ over a set $P$ which is reflexive, antisymmetric, and transitive. A set with a partial order is called a partially ordered set or poset.  A subset $U$ of $P$ is an upper set, if $x\in U$ and $x<y$, then $y\in U$. A map $f$ between posets is order preserving if $\alpha\leq\beta$ then $f(\alpha)\leq f(\beta)$.


\section{The characterization of simple augmentation modules}

Throughout in this section we suppose that $(M,\Omega)$ is a finite transformation monoid which is not a group, $\abs{\Omega}>1$ and $\mathbb{F}$ a field.

\begin{lem}\label{tran-0tran}
If the augmentation submodule $\Aug(\mathbb{F}\Omega)$ is simple and $\abs{\Omega}>2$, then $M$ is primitive and one of the following conditions holds:
\begin{enumerate}
\item the monoid $M$ is transitive on $\Omega$;
\item the monoid $M$ is $0$-transitive on $\Omega$.
\end{enumerate}
\end{lem}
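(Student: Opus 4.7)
The plan is to first show primitivity by translating $M$-set congruences into $\mathbb{F} M$-submodules of $\Aug(\mathbb{F}\Omega)$, and then to exploit primitivity to control the $M$-invariant subsets of $\Omega$ and conclude transitivity or $0$-transitivity.

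For primitivity, suppose $\equiv$ is a non-trivial proper congruence on $\Omega$ with quotient map $\pi\colon\Omega\to\Omega/{\equiv}$. Extended linearly, $\pi$ is an $\mathbb{F} M$-module homomorphism $\mathbb{F}\Omega\to\mathbb{F}(\Omega/{\equiv})$; since the augmentation $\eta$ factors through $\pi$, its kernel $K=\ker\pi$ is an $\mathbb{F} M$-submodule of $\Aug(\mathbb{F}\Omega)$. Non-triviality of $\equiv$ gives distinct $\omega,\omega'$ in a common class and hence $\omega-\omega'\in K\setminus\{0\}$; properness gives $\omega_1,\omega_2$ in different classes and hence $\omega_1-\omega_2\in\Aug(\mathbb{F}\Omega)\setminus K$. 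Thus $K$ is a non-zero proper submodule, contradicting simplicity of $\Aug(\mathbb{F}\Omega)$.

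The key observation now is that for any $M$-invariant subset $\Delta\subseteq\Omega$, the relation $\omega\equiv_\Delta\omega'$ defined by $\omega=\omega'$ or $\omega,\omega'\in\Delta$ is an $M$-set congruence (this uses only that $\Delta$ is invariant); it is non-trivial when $\abs{\Delta}\ge 2$ and proper when $\Delta\ne\Omega$. Primitivity therefore implies that every proper $M$-invariant subset of $\Omega$ has at most one element. Applying the same observation with $\Delta$ equal to the (invariant) set $F$ of fixed points of $\Omega$, and noting that $M$ is not a group and acts faithfully—so that some element of $\Omega$ is non-fixed—yields $\abs{F}\le 1$.

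Finally, if $M$ is not transitive, choose $\omega$ with $M\omega\subsetneq\Omega$; the bound on proper invariant subsets forces $M\omega=\{\omega\}$, making $\omega$ a fixed point, which by uniqueness is some $\omega_0\in\Omega$. For any $\omega'\in\Omega\setminus\{\omega_0\}$, the invariant subset $M\omega'$ cannot be a singleton (that would create a second fixed point), so by the same bound $M\omega'=\Omega$, whence $M$ is $0$-transitive. The conceptual heart of the argument is the congruence-to-submodule translation; the remaining case analysis is routine once one verifies that the various lumping relations are indeed congruences, which uses only $M$-invariance of the lumped sets.
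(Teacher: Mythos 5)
Your proof is correct, and the primitivity half is identical to the paper's argument: a non-trivial proper congruence $\equiv$ yields the kernel of $\mathbb{F}\Omega\rightarrow\mathbb{F}[\Omega/{\equiv}]$ as a non-zero proper submodule of $\Aug(\mathbb{F}\Omega)$. The only divergence is that where the paper finishes by citing \cite[Proposition 5.1]{Ben-Transformation-Monoids-2010} for ``primitive implies transitive or $0$-transitive,'' you supply a correct self-contained derivation (lumping an invariant subset $\Delta$ gives a congruence, so every proper invariant subset is a singleton, and the orbit/fixed-point analysis follows, using that $M$ is not a group to rule out all points being fixed); this inlines the citation rather than taking a genuinely different route.
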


\begin{proof}
First, we prove that $M$ is primitive. Let $\equiv$ be a congruence on $\Omega$ and $V^{\equiv}=\langle \alpha-\beta\mid \alpha\equiv \beta\rangle_{\mathbb{F}}$. Since $\Aug(\mathbb{F}\Omega)$ is spanned by the differences $\alpha-\beta$ with $\alpha, \beta \in \Omega$ and $V^{\equiv}$ is equal to the kernel of the $\mathbb{F}M$-module homomorphism $\mathbb{F}\Omega\rightarrow\mathbb{F}[\Omega/{\equiv}]$, $V^{\equiv}$ is a submodule of $\Aug(\mathbb{F}\Omega)$. If the congruence $\equiv$ is non-trivial and non-universal then $V^{\equiv}$ is a proper and non-zero submodule of $\Aug(\mathbb{F}\Omega)$, a contradiction with the assumption that the augmentation submodule $\Aug(\mathbb{F}\Omega)$ is simple. Hence, $M$ is primitive. Now, by~\cite[Proposition 5.1]{Ben-Transformation-Monoids-2010}, the result follows.
\end{proof}

We recall that $I(M)$ denotes the unique minimal ideal of $M$.

\begin{lem}\label{eW}
Suppose that $I(M)$ consists of a constant map and $M\setminus I(M)$ has a unique minimal $\J$-class $J$ which is regular. Let $e\in E(J)$,
$$\mathcal{W}=\langle 1_B\mid B= f^{-1}f\omega, \mbox{ for some }\omega\in\Omega\mbox{ and }f\in E(J)\rangle_{\mathbb{F}}$$
and
$$\mathcal{W'}=\langle 1_B\mid B= e^{-1}e\omega, \mbox{ for some }\omega\in\Omega  \rangle_{\mathbb{F}M}.$$
Then, we have $\mathcal{W}=\mathcal{W'}$, $\mathcal{W}^{\bot}$ is a submodule of $\Aug(\mathbb{F}\Omega)$
and is the largest submodule $W$ of $\mathbb{F}\Omega$ with $eW=0$.
\end{lem}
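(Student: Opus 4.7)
The proof splits into three parts: the equality $\mathcal{W}=\mathcal{W}'$, the inclusion $\mathcal{W}^{\bot}\subseteq\Aug(\mathbb{F}\Omega)$, and the maximality. I would begin with the inclusion $\mathcal{W}'\subseteq\mathcal{W}$. Since $e\in E(J)$, the generators of $\mathcal{W}'$ already lie in $\mathcal{W}$, so it suffices to show that $\mathcal{W}$ is stable under the right $\mathbb{F}M$-action $(w\cdot m)(\omega)=w(m\omega)$. For a generator $1_{f^{-1}f\omega}$ and $m\in M$ one has $(1_{f^{-1}f\omega}\cdot m)=1_{\{\omega'\,:\,fm\omega'=f\omega\}}$. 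I would then split into two cases for $fm\in fM$. Since $J$ is the unique minimal $\J$-class of $M\setminus I(M)$, either $fm\in J$ or $fm\in I(M)$. In the first case, stability gives $fm\,\R\,f$, and choosing any idempotent $g\in E(L_{fm})\subseteq E(J)$, the elements $fm$ and $g$ share a kernel partition, so the set in question is either empty or of the form $g^{-1}g\omega''$ for some $\omega''$. In the second case, $fm$ is a constant map, so the set is either empty or all of $\Omega$, and $1_{\Omega}=\sum_{x\in f\Omega}1_{f^{-1}(x)}\in\mathcal{W}$ handles the latter.

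The reverse inclusion $\mathcal{W}\subseteq\mathcal{W}'$ is the main obstacle, because one must realize the fibers of an arbitrary $f\in E(J)$ from those of $e$ using a single $\mathbb{F}M$-action. I would use linked idempotents: since $e,f\in E(J)$ and $J$ is a regular $\J$-class, Green's theorem gives $R_e\cap L_f\neq\emptyset$. Choose $v\in R_e\cap L_f$. From $v\,\R\,e$ we get $v=em$ for some $m$, whence $ev=v$; from $v\,\eL\,f$ we obtain that $v$ and $f$ induce the same partition of $\Omega$, so $v^{-1}v\omega=f^{-1}f\omega$ for every $\omega\in\Omega$. A direct computation then yields
\[
(1_{e^{-1}e(v\omega)}\cdot v)(\omega')=1_{e^{-1}e(v\omega)}(v\omega')=1\iff e(v\omega')=e(v\omega)\iff v\omega'=v\omega,
\]
using $ev=v$, so $1_{f^{-1}f\omega}=1_{e^{-1}e(v\omega)}\cdot v\in\mathcal{W}'$.

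Next, $\mathcal{W}^{\bot}$ is a left $\mathbb{F}M$-submodule of $\mathbb{F}\Omega$ by the standard duality: for $v\in\mathcal{W}^{\bot}$, $m\in M$, and $w\in\mathcal{W}$, one has $w(mv)=(w\cdot m)(v)=0$ since $\mathcal{W}$ is a right $\mathbb{F}M$-submodule. Because $1_{\Omega}\in\mathcal{W}$ (by the decomposition above) and $1_\Omega$ represents the augmentation map $\eta$, any $v\in\mathcal{W}^{\bot}$ satisfies $\eta(v)=0$, giving $\mathcal{W}^{\bot}\subseteq\Aug(\mathbb{F}\Omega)$.

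Finally, for maximality, the key identity is that for $v\in\mathbb{F}\Omega$, $ev=\sum_{x\in e\Omega}1_{e^{-1}(x)}(v)\cdot x$, so $ev=0$ if and only if $1_{e^{-1}e\omega}(v)=0$ for every $\omega\in\Omega$. This immediately gives $e\mathcal{W}^{\bot}=0$. Conversely, suppose $W\subseteq\mathbb{F}\Omega$ is a submodule with $eW=0$; then for any $v\in W$ and $m\in M$, $mv\in W$, so $e(mv)=0$, i.e.\ $(1_{e^{-1}e\omega}\cdot m)(v)=1_{e^{-1}e\omega}(mv)=0$ for all $\omega\in\Omega$ and $m\in M$. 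As such elements span $\mathcal{W}'=\mathcal{W}$, we conclude $W\subseteq\mathcal{W}^{\bot}$, which establishes that $\mathcal{W}^{\bot}$ is the largest submodule annihilated by $e$.
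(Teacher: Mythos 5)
Your proof is correct and follows essentially the same route as the paper's: the same case analysis on whether $fm$ lies in $J$ or in $I(M)$ (using minimality of $J$ and stability), the same use of an element of $R_e\cap L_f$ to transport fibers of $e$ to fibers of an arbitrary $f\in E(J)$, and the same characterization of $ev=0$ via vanishing of the functionals $1_{e^{-1}e\omega}$. The only real difference is presentational: you prove $\mathcal{W}=\mathcal{W}'$ directly by showing $\mathcal{W}$ is a right $\mathbb{F}M$-submodule containing the generators of $\mathcal{W}'$ and exhibiting each $1_{f^{-1}f\omega}$ as a translate $1_{e^{-1}e(v\omega)}\cdot v$, whereas the paper proves $\mathcal{W}^{\bot}=\mathcal{W}'^{\bot}$ and recovers $\mathcal{W}=\mathcal{W}'$ by double orthogonality --- your version is, if anything, slightly cleaner.
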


\begin{proof}
There exist subsets $B_1,\ldots,B_r$ of $\Omega$ such that $\Omega$ is partitioned by them and there exists an element $\omega_i\in\Omega$ such that $B_i=e^{-1}e\omega_i$, for each $1\leq i\leq r$.
We prove that $\mathcal{W}^{\bot}=\mathcal{W'}^{\bot}$.
Let $w=\sum c_{\omega}\omega\in \mathcal{W}^{\bot}$.
Then, we have $1_{B_{i}}(w)=0$, for every $1\leq i\leq r$ and, thus, we have $\sum c_{\omega}=0$. In particular, we have $w\in \Aug(\mathbb{F}\Omega)$.
Let $1\leq i\leq r$ and $m\in M$. If $(em)^{-1}\omega_i=\emptyset$ then $m1_{B_{i}}(w)=1_{B_{i}}(mw)=0$. Now, suppose that $m1_{B_i}(\omega)\neq 0$.
If $em$ is a constant map, then $(em)^{-1}\omega_i=\Omega$ and, thus, $m1_{B_{i}}(w)=1_{B_{i}}(mw)=\sum c_{\omega}=0$. Otherwise, we have $em\in J$ because $J$ is the minimal $\J$-class of $M\setminus I(M)$.
There is an idempotent $f\in E(J)$ such that $f\eL em$.
Hence, we have $(em)^{-1}\omega_i=f^{-1}\omega_f$, for some $\omega_f\in f\Omega$.
Thus, we have $m1_{B_{i}}(w)=1_{B_{i}}(mw)=1_{(em)^{-1}\omega_i}(w)=1_{B_f}(w)=0$ because $1_{B_f}\in \mathcal{W}$ as $B_f=f^{-1}\omega_f$. It follows that $\mathcal{W}^{\bot} \subseteq\mathcal{W'}^{\bot}$.
Now, suppose that $w\in \mathcal{W'}^{\bot}$, $f\in E(J)$ and $\omega_f\in f\Omega$. There exists an element $m\in J$ such that $f\eL m$ and $m\R e$. Hence, we have $f^{-1}\omega_f=m^{-1}\omega_m$, for some $\omega_m\in m\Omega$, and $\omega_m=\omega_i$, for some integer $1\leq i\leq r$, since $em=m$.
Now, as $em=m$ and $f^{-1}\omega_f=m^{-1}\omega_m=(em)^{-1}\omega_i=m^{-1}e^{-1}\omega_i=m^{-1}B_i$, we have $1_{B_{f}}(w)=1_{m^{-1}B_i}(w)=m1_{B_i}(w)=0$ where $B_f=f^{-1}\omega_f$.
It follows that $\mathcal{W}^{\bot}=\mathcal{W'}^{\bot}$ and, thus, we have $\mathcal{W}=\mathcal{W'}$.

Suppose that $W$ is a submodule of $\mathbb{F}\Omega$ with $eW=0$. Let $w=\sum c_{\omega}\omega\in W$ and $m\in M$.
We have $emw=0$.
Since
\begin{eqnarray}\label{emw}
emw=em\sum c_{\omega}\omega=\sum_{1}^{r}(\sum_{m\omega\in B_i}c_{\omega})\omega_i,
\end{eqnarray}
we have $\sum_{m\omega\in B_i}c_{\omega}=0$, for all $1\leq i\leq r$. It follows that $1_{B_i}(mw)=0$ and, thus, $m1_{B_i}(w)=0$, for every $1\leq i\leq r$. Therefore, we have $w\in\mathcal{W'}^{\bot}$ and, thus, $w\in\mathcal{W}^{\bot}$.
Again as $\mathcal{W}^{\bot}=\mathcal{W'}^{\bot}$, if $v\in \mathcal{W}^{\bot}$, then $1_{B_i}(v)=0$, for every $1\leq i\leq r$, and, thus, $ev=0$; see~(\ref{emw}) with $m=1$ and $w=v$. Therefore, we have $e\mathcal{W}^{\bot}=0$.
Therefore, the largest submodule $W$ of $\mathbb{F}\Omega$ with $eW=0$ is $\mathcal{W}^{\bot}$.
\end{proof}

For a transformation monoid $M\leq T_{\Omega}$ such that $I(M)$ contains a constant map and $M\setminus I(M)$ has a unique minimal $\J$-class $J$, which, moreover, is regular, we can define a graph $\Gamma(M)=(\Omega,E)$ where $$E=\{\{v_1,v_2\}\mid fv_1=v_1\mbox{ and }fv_2=v_2, \mbox{ for some } f\in E(J)\}.$$ Also, we define the surjective map $\phi_M\colon \mathbb{F}\Omega\rightarrow \mathbb{F}\pi_0(\Gamma(M))$ with $\phi_M(\alpha)$ the connected component of $\alpha$ in $\Gamma(M)$, where $\pi_0(\Gamma(M))$ denotes the set of connected components of $\Gamma(M)$.

\begin{lem}\label{GraphGamma}
Suppose that $I(M)$ consists of a constant map and $M\setminus I(M)$ has a unique minimal $\J$-class $J$ which is regular.
The following conditions hold:
\begin{enumerate}
\item the monoid $M$ acts on $\Gamma(M)$ by simplicial maps;
\item the map $\phi_M$ is an $\mathbb{F}M$-module homomorphism.
\end{enumerate}
\end{lem}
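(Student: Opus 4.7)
The plan is to prove (1) first and obtain (2) as a direct consequence. For (1), fix any edge $\{v_1,v_2\}\in E(\Gamma(M))$ with witnessing idempotent $f\in E(J)$ satisfying $fv_1=v_1$ and $fv_2=v_2$, and fix $m\in M$. If $mv_1=mv_2$ the simplicial condition is vacuous, so assume $mv_1\neq mv_2$ and consider $n=mf$. Since $fv_i=v_i$, we have $nv_i=mv_i$ for $i=1,2$, so $\{mv_1,mv_2\}\subseteq n\Omega$ and in particular $n$ is not a constant map. Because $I(M)$ consists of constant maps this yields $n\notin I(M)$. On the other hand $n=mf\in MfM$, so $J_n\leq J$ in the $\J$-order; since $J$ is the unique minimal $\J$-class of $M\setminus I(M)$, $J_n$ cannot lie strictly below $J$, forcing $n\in J$.

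Since $J$ is regular, $n\R e'$ for some $e'\in E(J)$. The $\R$-equivalence, interpreted in $T_\Omega$, gives equality of images $n\Omega=e'\Omega$, and any idempotent of $T_\Omega$ fixes its image pointwise; hence $e'(mv_1)=mv_1$ and $e'(mv_2)=mv_2$. Thus $e'$ witnesses $\{mv_1,mv_2\}\in E(\Gamma(M))$, which shows that $m$ acts as a simplicial map. The step I expect to require the most care is the identification $n=mf\in J$: it depends both on interpreting $I(M)$ as consisting exclusively of constant maps (so that "not constant" forces "not in $I(M)$") and on the uniqueness of $J$ as the minimal $\J$-class of $M\setminus I(M)$. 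Once this is secured, the $\R$-equivalence and image-fixing property do the rest.

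For (2), part (1) implies that for any path $w_0,w_1,\ldots,w_k$ in $\Gamma(M)$ and any $m\in M$, consecutive vertices of the image sequence $mw_0,mw_1,\ldots,mw_k$ are either equal or adjacent, so the whole sequence lies in a single connected component of $\Gamma(M)$. Hence whenever $\alpha$ and $\beta$ belong to the same component, so do $m\alpha$ and $m\beta$. This yields a well-defined action of $M$ on $\pi_0(\Gamma(M))$ by $m\cdot [\alpha]=[m\alpha]$, extended linearly to $\mathbb{F}\pi_0(\Gamma(M))$. By construction $\phi_M(m\alpha)=[m\alpha]=m\cdot[\alpha]=m\phi_M(\alpha)$ on the basis $\Omega$, so $\phi_M$ is an $\mathbb{F}M$-module homomorphism, which completes the proof.
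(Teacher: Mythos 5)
Your proposal is correct and follows essentially the same route as the paper: both show $mf\in J$ (using that $mf$ is non-constant, hence outside $I(M)$, and that $J$ is the unique minimal $\mathscr J$-class there), then produce an idempotent of $J$ that is $\mathscr R$-equivalent to $mf$ and fixes $mv_1$ and $mv_2$, the paper via $h'(mf)=mf$ and you via the equivalent observation that idempotents of $T_\Omega$ fix their image pointwise. Part (2) is handled identically in both, so no further comment is needed.
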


\begin{proof}
Let $\alpha,\beta\in \Omega$ and $m\in M$. Suppose that there is an edge between $\alpha$ and $\beta$ in $\Gamma(M)$. Hence, there is an idempotent $h\in E(J)$ such that $h\alpha=\alpha$ and $h\beta=\beta$. We prove that if $m\alpha\neq m\beta$ then there is an edge between $m\alpha$ and $m\beta$ in $\Gamma(M)$. Hence, we suppose that $m\alpha\neq m\beta$. Since $m\alpha=mh\alpha$, $m\beta= mh\beta$ and $m\alpha\neq m\beta$, we have $mh\in J$ and, thus, there is an idempotent $h'\in J$ such that $h'mh=mh$. It follows that $h'm\alpha=m\alpha$ and $h'm\beta=m\beta$ and, thus, there is an edge between $m\alpha$ and $m\beta$. Hence, $M$ acts on $\Gamma(M)$ by simplicial maps.
It follows that the map $\phi_M$ is an $\mathbb{F}M$-module homomorphism as simplicial maps preserve connected components.
\end{proof}

We are now prepared to prove our characterization of simple augmentation modules.

\begin{thm}\label{aug-simple-monoids}
Let $M\leq T_{\Omega}$ with $M$ not a group.
The augmentation submodule $\Aug(\mathbb{F}\Omega)$ is simple if and only if the following conditions hold:
\begin{enumerate}
\item the monoid $M$ contains a constant map;
\item the subset $M\setminus I(M)$ has a unique minimal $\J$-class $J$ and moreover $J$ is regular;
\item if $e\in E(J)$, then $\Aug(\mathbb{F}e\Omega)$ is a simple $\mathbb{F}G_e$-module;
\item the rank of the incidence matrix of the set system $$\{B\mid B= f^{-1}f\omega, \mbox{ for some }\omega\in\Omega\mbox{ and }f\in E(J)\}$$ is $\abs{\Omega}$ over $\mathbb{F}$;
\item the graph $\Gamma(M)$ is connected.
\end{enumerate}
\end{thm}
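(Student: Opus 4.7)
\noindent My plan is to prove sufficiency (``if'') first, since it proceeds cleanly from Lemmas~\ref{eW} and~\ref{GraphGamma}, and then to tackle the harder necessity (``only if'') direction.

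For sufficiency, suppose (1)--(5) hold and let $V$ be a nonzero $\mathbb{F}M$-submodule of $\Aug(\mathbb{F}\Omega)$. Fix $e\in E(J)$. By Lemma~\ref{eW}, $\mathcal{W}^{\bot}$ is the largest $\mathbb{F}M$-submodule of $\mathbb{F}\Omega$ annihilated by $e$, and it sits inside $\Aug(\mathbb{F}\Omega)$. Condition (4) is equivalent to $\mathcal{W}=\mathbb{F}^{\Omega}$, so $\mathcal{W}^{\bot}=0$; hence $eV\neq 0$ (else $V\subseteq \mathcal{W}^{\bot}=0$). Since $e$ fixes $e\Omega$ pointwise, $e\Aug(\mathbb{F}\Omega)=\Aug(\mathbb{F}e\Omega)$, and $eV$ is a nonzero $\mathbb{F}G_{e}$-submodule of $\Aug(\mathbb{F}e\Omega)$; condition (3) forces $eV=\Aug(\mathbb{F}e\Omega)$, so every difference $\omega_{1}-\omega_{2}$ with distinct $\omega_{1},\omega_{2}\in e\Omega$ lies in $V$. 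For an arbitrary edge $\{\alpha,\beta\}$ of $\Gamma(M)$ I pick $f\in E(J)$ fixing both endpoints. Since $e$ and $f$ are $\J$-equivalent in the regular class $J$, Green's lemma produces $n\in J$ with $n\R f$ and $n\eL e$; then $n\Omega=f\Omega$ from the $\R$-relation, while $n\eL e$ yields $n=ue$ for some $u\in M$, so $n(\omega)=n(e\omega)$ for every $\omega\in\Omega$ and $n$ restricts to a bijection $e\Omega\to f\Omega$. Therefore $\alpha-\beta=n(\omega_{1}-\omega_{2})\in V$ for suitable $\omega_{i}\in e\Omega$. Connectedness of $\Gamma(M)$ (condition (5)) then lets me telescope over a path to obtain $\alpha-\beta\in V$ for all $\alpha,\beta\in\Omega$, so $V=\Aug(\mathbb{F}\Omega)$.

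For necessity, suppose $\Aug(\mathbb{F}\Omega)$ is simple. Lemma~\ref{tran-0tran} supplies primitivity together with transitivity or $0$-transitivity. The key point is that $m\Aug(\mathbb{F}\Omega)=0$ iff $m\alpha=m\beta$ for all $\alpha,\beta\in\Omega$, i.e.\ iff $m$ is a constant map; so the annihilator of $\Aug(\mathbb{F}\Omega)$ in $M$ is exactly the set of constant maps. If $e$ is an apex of $\Aug(\mathbb{F}\Omega)$, this set equals $I(e)$. Granting (1) momentarily, $I(M)$ consists of constants (in both the transitive and $0$-transitive cases), so $I(e)=I(M)$ and every $m\notin I(M)$ is $\J$-above $e$; hence $J_{e}$ is the unique minimal $\J$-class of $M\setminus I(M)$, and is regular because it contains $e$, giving (2). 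Condition (3) is the Clifford--Green correspondence of~\cite[Theorem~5.5]{Ben-Rep-Monoids-2016} for the simple module $\Aug(\mathbb{F}\Omega)$ with apex $e$. For (4), Lemma~\ref{eW} makes $\mathcal{W}^{\bot}$ a submodule of $\Aug(\mathbb{F}\Omega)$ with $e\mathcal{W}^{\bot}=0$; since $e\Aug(\mathbb{F}\Omega)\neq 0$ by the apex property, $\mathcal{W}^{\bot}$ is proper, hence zero by simplicity, which is precisely the rank condition on the incidence matrix. For (5), Lemma~\ref{GraphGamma} provides the $\mathbb{F}M$-module map $\phi_{M}\colon \mathbb{F}\Omega\to \mathbb{F}\pi_{0}(\Gamma(M))$; if $\Gamma(M)$ were disconnected, the clique $e\Omega$ (which has size at least two) would lie in one component while another component remained separate, whence $\ker(\phi_{M})\cap\Aug(\mathbb{F}\Omega)$ would be a proper nonzero submodule of $\Aug(\mathbb{F}\Omega)$, contradicting simplicity.

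The main obstacle is condition (1). I would argue by contradiction: if $M$ contained no constant then $I(e)=\emptyset$, forcing $e\in I(M)$ to be an idempotent with $2\leq\abs{e\Omega}<\abs{\Omega}$ and $J_{e}=I(M)$. The task is then to exhibit a proper nonzero $\mathbb{F}M$-submodule of $\Aug(\mathbb{F}\Omega)$ from the non-synchronizing structure imposed on a primitive, transitive, non-group transformation monoid by the absence of constants. I expect the bulk of the work to lie here, requiring a careful combination of primitivity with the rigidity of the minimum-rank images (whose intersection structure witnesses non-synchronization) to extract the required invariant subspace.
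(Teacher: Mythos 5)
Your sufficiency argument and your derivations of conditions (2)--(5) in the necessity direction are correct and essentially coincide with the paper's proof: the same use of Lemma~\ref{eW} to identify $\mathcal{W}^{\bot}$ as the largest submodule killed by $e$, the same apex/annihilator reasoning, and the same transport of differences from $e\Omega$ along $J$ followed by telescoping over paths in $\Gamma(M)$. The problem is condition (1), which you explicitly leave open: you only set up a contradiction (``if $M$ contained no constant then $I(e)=\emptyset$, forcing $e\in I(M)$ with $2\leq\abs{e\Omega}<\abs{\Omega}$'') and then state that ``the task is to exhibit a proper nonzero $\mathbb{F}M$-submodule,'' expecting ``the bulk of the work to lie here.'' That is a genuine gap, not a routine verification: without (1) you cannot even get the chain of deductions for (2) started, since you yourself flag that you are ``granting (1) momentarily'' before identifying $I(e)$ with $I(M)$.

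The paper closes this gap not by constructing an invariant subspace but by a purely combinatorial fact about transformation monoids: Lemma~\ref{tran-0tran} gives primitivity together with transitivity or $0$-transitivity (note this lemma requires $\abs{\Omega}>2$; the case $\abs{\Omega}=2$ is disposed of separately, since a non-group submonoid of $T_{\Omega}$ with $\abs{\Omega}=2$ must contain a constant), and then~\cite[Theorem~13.6]{Ben-Rep-Monoids-2016} is invoked to conclude that a primitive transitive (resp.\ $0$-transitive) transformation monoid that is not a group has $I(M)$ equal to the set of all constant maps (resp.\ the constant map to the fixed point). This is essentially Rystsov's theorem that primitive non-group transformation monoids are synchronizing. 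Your proposed alternative --- extracting a proper nonzero submodule of $\Aug(\mathbb{F}\Omega)$ directly from the non-synchronizing structure --- is not carried out, and it is not clear it would be any easier than the combinatorial statement it would replace; to complete the proof you should either cite the synchronization theorem or supply a proof of it.
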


\begin{proof}
First, we suppose that the augmentation submodule $\Aug(\mathbb{F}\Omega)$ is simple.
If $\abs{\Omega}=2$ and $M$ has no constant map, then $M$ is a group, a contradiction with the assumption. Now, suppose that $\abs{\Omega}>2$. By Lemma~\ref{tran-0tran}, the monoid $M$ is transitive or $0$-transitive on $\Omega$. By~\cite[Theorem 13.6]{Ben-Rep-Monoids-2016}, the former case implies that the ideal $I(M)$ is the set of all constant mappings on $\Omega$ and the latter case implies that $I(M)$ consists of the constant map to the unique fixed point of $M$. Therefore the monoid $M$ has a constant map.
Since the submodule $\Aug(\mathbb{F}\Omega)$ is simple, it has an apex $e$ in a regular $\J$-class $J$.
By~\cite[Proposition 7.9]{Ben-Transformation-Monoids-2010} and since $I(M)$ consists of constant maps, we have $I(M) = \{m \in M \mid m \Aug(\mathbb{F}\Omega) = 0\}$. Hence, $I(e)=I(M)$ and, thus, the $\J$-class $J$ is unique and minimal in $M\setminus I(M)$. Now, by~\cite[Theorem 5.5(i)]{Ben-Rep-Monoids-2016} and~\cite[Proposition 5.4(ii)]{Ben-Rep-Monoids-2016}, condition (3) holds.
Let $e\in E(J)$ and $\mathcal{W}=\langle 1_B\mid B= f^{-1}f\omega, \mbox{ for some }\omega\in\Omega\mbox{ and }f\in E(J)  \rangle_{\mathbb{F}}$. By Lemma~\ref{eW}, $\mathcal{W}^{\bot}$ is a submodule of $\Aug(\mathbb{F}\Omega)$. The augmentation submodule $\Aug(\mathbb{F}\Omega)$ is simple. Hence, $\mathcal{W}^{\bot}=0$ or $\mathcal{W}^{\bot}=\Aug(\mathbb{F}\Omega)$.
Now, as $e\mathcal{W}^{\bot}=0$ and $e\Aug(\mathbb{F}\Omega)\neq 0$, we have $\mathcal{W}^{\bot}=0$. If follows that $\mathcal{W}=\mathbb{F}^{\Omega}$ and, thus, condition (4) holds. By Lemma~\ref{GraphGamma}, the surjective map $\phi_M$ is an $\mathbb{F}M$-module homomorphism. It is clear that
$$
\ker\phi_M=\langle \alpha-\alpha'\mid\alpha \mbox{ and }\alpha'\mbox{ are in the same connected component}\rangle_{\mathbb{F}}.
$$
Since $2\leq \rk(J)$, the graph $\Gamma(M)$ has edges and, thus, we have $\ker\phi_M\neq\{0\}$. Now, as the submodule $\Aug(\mathbb{F}\Omega)$ is simple and $\ker\phi_M\subseteq \Aug(\mathbb{F}\Omega)$ is a submodule, we have $\ker\phi_M= \Aug(\mathbb{F}\Omega)$. It follows $\abs{\pi_0(\Gamma(M)}=1$ and so the graph $\Gamma(M)$ is connected.

Now, we suppose that $(M,\Omega)$ satisfies the five conditions of the theorem.
Suppose that $W$ is a non-zero submodule of $\Aug(\mathbb{F}\Omega)$.
Let $e\in E(J)$.
By Lemma~\ref{eW} the largest submodule $V$ of $\mathbb{F}\Omega$ with $eV=0$ is
$$\mathcal{W}^{\bot}=\langle 1_B\mid B= f^{-1}f\omega, \mbox{ for some }\omega\in\Omega\mbox{ and }f\in E(J)  \rangle_{\mathbb{F}}^{\bot}.$$ Now, since by condition (4), $\mathcal{W}=\mathbb{F}^{\Omega}$,
we have $\mathcal{W}^{\bot}=0$ and so $eW\neq 0$. By condition (3), the augmentation submodule $\Aug(\mathbb{F}e\Omega)=e\Aug(\mathbb{F}\Omega)$ is a simple $\mathbb{F}G_e$-module. Hence, we have $eW=\Aug(\mathbb{F}e\Omega)$ and, thus, we have $\alpha-\alpha'\in eW$, for every $\alpha,\alpha'\in e\Omega$.
Let $f\in E(J)$, $\omega_1\neq\omega_2\in \im f$ and choose $m\in M$ such that $f\R m$ and $m\eL e$. Since $\omega_1,\omega_2\in \im f=\im m$, there exist elements $\alpha_1,\alpha_2$ such that $m\alpha_1=\omega_1$ and $m\alpha_2=\omega_2$. As $\omega_1\neq \omega_2$ and $m\eL e$,
we have $e\alpha_1\neq e\alpha_2$ and thus, the element $e\alpha_1- e\alpha_2$ is a non-zero element of $e\Aug(\mathbb{F}\Omega)$. Now, as $\omega_1-\omega_2=m(e\alpha_1- e\alpha_2)$, we have $\omega_1-\omega_2\in \langle eW\rangle_{\mathbb{F}M}$. Let $\lambda$ and $\lambda'$ be distinct elements of $\Omega$.
By condition (5), the graph $\Gamma(M)$ is connected.  Hence, there exist elements $\lambda_1,\ldots,\lambda_s\in \Omega$ such that $\lambda_1=\lambda$, $\lambda_s=\lambda'$ and there is an edge between $\lambda_i$ and $\lambda_{i+1}$ in the graph $\Gamma(M)$, for all $1\leq i\leq t-1$. By above, we have $\lambda_i-\lambda_{i+1}\in \langle eW\rangle_{\mathbb{F}M}$, for all $1\leq i\leq t-1$.
Therefore, $\lambda-\lambda'\in \langle eW\rangle_{\mathbb{F}M}$. It follows that $\Aug(\mathbb{F}\Omega)=\langle eW\rangle_{\mathbb{F}M}\subseteq W$.
\end{proof}

\begin{rmk}
The necessity of condition (5) in Theorem~\ref{aug-simple-monoids} can be understood in the following way more conceptual way.  Since $M$ acts by simplicial maps on $\Gamma(M)$, the augmented simplicial chain complex for $\Gamma(M)$ with coefficients in $\mathbb F$ is a chain complex of $\mathbb FM$-modules.  Therefore, the reduced homology $\widetilde{H}_0(\Gamma(M))$ is a quotient $\mathbb FM$-module of $\Aug(\mathbb F\Omega)$ and it is a proper quotient because $\Gamma(M)$ has edges.  By simplicity of $\Aug(\mathbb F\Omega)$, it must be $0$ and so $\Gamma(M)$ must be connected.
\end{rmk}

To clarify the connection with $2$-transitivity we establish the following proposition.

\begin{prop}\label{2-tran}
If $(M,\Omega)$ is $2$-transitive and $M$ is not a group, then $M$ satisfies conditions (1), (2) and (5) of Theorem~\ref{aug-simple-monoids}, $M$ satisfies condition (3) for the field $\mathbb{C}$ and, moreover, the graph $\Gamma(M)$ is complete.
\end{prop}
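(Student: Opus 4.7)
The plan is to verify conditions (1), (2), (5) of Theorem~\ref{aug-simple-monoids} and condition (3) over $\mathbb{C}$, plus the extra claim that $\Gamma(M)$ is complete, using the rigidity that $2$-transitivity imposes on a transformation monoid.

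First I would establish (1). Since $M$ is $2$-transitive but not a group, $M$ contains a non-bijection; let $m\in M$ have minimum rank $r$, and suppose for contradiction $r\geq 2$. Pick $\alpha\neq\beta$ in $m\Omega$ and a collapsing pair $\gamma\neq\delta$ with $m\gamma=m\delta$, then use $2$-transitivity to produce $n\in M$ with $n\alpha=\gamma$ and $n\beta=\delta$. The element $mnm$ now identifies points in the two distinct fibres $m^{-1}(\alpha)$ and $m^{-1}(\beta)$ of $m$, so $\rk(mnm)<r$, contradicting minimality. Hence $r=1$. Combined with the transitivity of $M$ (which comes for free from $2$-transitivity), this forces $I(M)$ to be the set of all constant maps on $\Omega$.

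For (2), and for the auxiliary claim that $G_e$ acts $2$-transitively on $e\Omega$ for any $e\in E(J)$ (which is what we need for (3)), I would appeal to the structure theory of $2$-transitive transformation monoids in~\cite[Theorem~13.6]{Ben-Rep-Monoids-2016} together with the companion results in~\cite{Ben-Transformation-Monoids-2010}, which give precisely that $M\setminus I(M)$ has a unique minimal $\J$-class $J$, that $J$ is regular, and that the maximal subgroup $G_e$ at each $e\in E(J)$ is $2$-transitive on its image $e\Omega$. Condition (3) over $\mathbb{C}$ then follows from the classical Burnside theorem characterising $2$-transitivity by simplicity of the complex augmentation submodule.

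Finally, I would show $\Gamma(M)$ is complete, which gives (5) for free. Fix $e\in E(J)$; since $\rk(e)\geq 2$ there are distinct $\alpha_0,\beta_0\in e\Omega$, and by the definition of $\Gamma(M)$ the pair $\{\alpha_0,\beta_0\}$ is an edge witnessed by $e$. Given any distinct $\gamma,\delta\in\Omega$, $2$-transitivity supplies $m\in M$ with $m\alpha_0=\gamma$ and $m\beta_0=\delta$; by Lemma~\ref{GraphGamma}, $m$ acts simplicially on $\Gamma(M)$, and since $m\alpha_0\neq m\beta_0$ the edge is not collapsed, so $\{\gamma,\delta\}$ is again an edge. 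The step I expect to be the main obstacle is the combination of uniqueness and regularity in (2) together with the $2$-transitivity of $G_e$ on $e\Omega$; both are essentially encoded in the structural results we cite, but were a self-contained proof required, one would iterate the rank-reduction trick of step~(1) and exploit the fact that a product $m_1 n m_2$ with $m_i$ in distinct minimal $\J$-classes above $I(M)$ would have to drop into $I(M)$, which can be ruled out by a further application of $2$-transitivity.
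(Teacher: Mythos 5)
Your argument for condition (1) is essentially the paper's rank-reduction argument and is fine, and your derivation of the completeness of $\Gamma(M)$ (hence of (5)) is a genuinely different and cleaner route than the paper's: the paper reconstructs, for each pair $\alpha\neq\beta$, an idempotent of $J$ fixing both points by forming $m_2am_1$ and passing to an $\mathscr{R}$-related idempotent, whereas you simply transport a single edge of $\Gamma(M)$ around by $2$-transitivity using Lemma~\ref{GraphGamma}. That shortcut is valid, but note it only becomes available after (1) and (2) are established, since Lemma~\ref{GraphGamma} presupposes them.

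The genuine gap is in how you obtain (2) and, above all, the $2$-transitivity of $G_e$ on $e\Omega$. Neither \cite[Theorem~13.6]{Ben-Rep-Monoids-2016} nor the companion results of \cite{Ben-Transformation-Monoids-2010} assert that a $2$-transitive transformation monoid has a unique minimal regular $\mathscr{J}$-class above $I(M)$ whose maximal subgroup acts $2$-transitively on $e\Omega$; that is precisely the content this proposition is meant to supply (the cited theorem concerns the structure of the minimal ideal itself), so delegating it to a citation is circular in effect. Your fallback sketch for uniqueness and regularity of $J$ is the right idea and matches the paper: for $m_1,m_2$ in two minimal classes, $2$-transitivity produces $m$ with $m_2mm_1$ of rank at least $2$, forcing the classes to coincide and giving $J^2\cap J\neq\emptyset$. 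But ``iterate the rank-reduction trick'' does not deliver the $2$-transitivity of $G_e$; the required argument is of a different kind. Given distinct pairs $(\gamma_1,\gamma_2)$ and $(\gamma_3,\gamma_4)$ in $e\Omega$, take $m$ with $m\gamma_1=\gamma_3$ and $m\gamma_2=\gamma_4$; then $eme$ realizes the same assignment on $e\Omega$, is non-constant, hence lies in $J$ by the minimality and uniqueness just proved, and $eMe\cap J=G_e$. Without this step, condition (3) over $\mathbb{C}$ (via Burnside) is unsupported.
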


\begin{proof}
Suppose that $M$ has no constant map. Then $I(M)$ has an idempotent $a\in I(M)$ with $a$ not a constant map. Since $M$ is not a group, the element $a$ is not the identity of $M$ and, thus, there exist distinct $\alpha_1,\alpha_2\in \Omega$ such that $a\alpha_1=\alpha_1$ and $a\alpha_2=\alpha_1$. Also, since $a$ is not a constant map, there exists $\alpha_3\in\Omega$ such that $a\alpha_3=\alpha_3$ and $\alpha_1\neq\alpha_3$. Since $M$ is $2$-transitive, there exists an element $b$ such that $b\alpha_1=\alpha_2$ and $b\alpha_3=\alpha_1$.
Now, as $aba\alpha_1=aba\alpha_3=\alpha_1$, the rank of the element $aba$ is strictly smaller than the rank of $a$, a contradiction with the assumption that $a\in I(M)$. Therefore, $M$ has a constant map. So $I(M)$ consists of a constant map.

Now, suppose that  $J$ and $J'$ are minimal $\J$-classes of $M\setminus I(M)$. Let $m_1\in J$, $m_2\in J'$. Then, there exist elements $\beta_i\in\Omega$ such that $m_1\beta_1=\beta_2$, $m_1\beta_3=\beta_4$, $m_2\beta_5=\beta_6$, $m_2\beta_7=\beta_8$, $\beta_2\neq\beta_4$ and $\beta_6\neq\beta_8$. Since $M$ is $2$-transitive and $\beta_5\neq \beta_7$, there exists an element $m\in M$ such that $m\beta_2=\beta_5$, $m\beta_4=\beta_7$. Since $m_2mm_1\beta_1\neq m_2mm_1\beta_3$, $m_2mm_1\not\in I(M)$ and thus $J=J'$. Therefore, $M\setminus I(M)$ has a unique minimal $\J$-class $J$. Also, as $m_2mm_1\in J$, $J$ is a regular $\J$-class.

We now prove $G_e$ is $2$-transitive on $e\Omega$.
Let $e\in E(J)$ and $\gamma_i\in e\Omega$ with $\gamma_1\neq\gamma_2$ and $\gamma_3\neq\gamma_4$. Since $M$ is $2$-transitive, there exists an element $m\in M$ such that $m\gamma_1=\gamma_3$ and $m\gamma_2=\gamma_4$. Since $e$ is idempotent, we have $e\gamma_i=\gamma_i$, for all integers $1\leq i\leq 4$. It follows that $eme\gamma_1=\gamma_3$ and $eme\gamma_2=\gamma_4$. Thus, $eme\not\in I(M)$ and so $eme\in eMe\cap J=G_e$. Therefore, the maximal subgroup $G_e$ is $2$-transitive on $e\Omega$. Now, by~\cite[Proposition B.12]{Ben-Rep-Monoids-2016}, $\Aug(\mathbb{C}e\Omega)$ is a simple $\mathbb{C}G_e$-module.

Let $\alpha,\beta\in\Omega$ with $\alpha\neq\beta$. Since the rank of $J$ is more than one, there is an element $a$ in $J$ such that $a\gamma_1=\gamma_3$ and $a\gamma_2=\gamma_4$, for some elements $\gamma_i\in \Omega$, for all $1\leq i\leq 4$, with $\gamma_3\neq \gamma_4$ whence $\gamma_1\neq \gamma_2$. Now, as $M$ is $2$-transitive, there are elements $m_1$ and $m_2$ in $M$ such that $m_1\alpha=\gamma_1$, $m_1\beta=\gamma_2$, $m_2\gamma_3=\alpha$ and $m_2\gamma_4=\beta$. Hence, $m_2am_1\alpha=\alpha$ and $m_2am_1\beta=\beta$. As the rank of $m_2am_1$ is more than one, $a\in J$ and $J$ is the unique minimal $\J$-class of $M\setminus I(M)$, we have $m_2am_1\in J$. Hence, there is an idempotent $e\in J$ with $e\R m_2am_1$. So $e\alpha=\alpha$ and $e\beta=\beta$.
Therefore, the graph $\Gamma(M)$ is complete.
\end{proof}


\section{Examples and counterexamples}

In this section we investigate number of examples satisfying the five conditions of Theorem~\ref{aug-simple-monoids} and a having simple augmentation modules over any field.

\begin{prop}\label{GraGa}
Let $\Gamma=(\Omega, E(\Gamma))$ be a connected graph with $2\leq\abs{\Omega}$ and $\End(\Gamma)=\{f\colon \Omega\rightarrow\Omega\mid f\mbox{ is a simplicial map on }\Gamma\}$. Let $M\leq \End(\Gamma)$ be a transformation monoid such that $M$ satisfies the following conditions:
\begin{enumerate}
\item $M$ is transitive on $\Omega$;
\item $M$ is edge transitive (for all $\{v_1,v_2\},\{w_1,w_2\}\in E(\Gamma)$, there exists an element $m\in M$ such that $\{m(v_1),m(v_2)\}=\{w_1,w_2\}$);
\item for some edge $\{v_1,v_2\}\in E(\Gamma)$, there exists an element $m\in M$ such that $m(v_1)=m(v_2)$;
\item there exists an element $a\in M$ such that the rank of $a$ is $2$.
\end{enumerate}
Then $I(M)$ is the set of all constant maps of $M$ and $M\setminus I(M)$ has a unique minimal $\J$-class $J$ which is regular, consists of all elements of $M$ with rank two and satisfies conditions (3) and (5) of Theorem~\ref{aug-simple-monoids} and moreover $\Gamma(M)=\Gamma$.
\end{prop}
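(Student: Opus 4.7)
The plan is to verify in sequence: $I(M)$ consists of the constant maps; $M\setminus I(M)$ has a unique minimal $\J$-class $J$, which is regular and consists of all rank-$2$ elements; conditions~(3) and~(5) of Theorem~\ref{aug-simple-monoids} hold; and $\Gamma(M)=\Gamma$.

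Starting from a rank-$2$ element $a$ provided by~(4), the image $a\Omega$ is a connected two-vertex subgraph of $\Gamma$, hence an edge $\{x,y\}$. Edge transitivity produces $p\in M$ sending $\{x,y\}$ onto the collapsible edge $\{v_1,v_2\}$ of~(3), and composing with the collapser $m$ gives $mpa$ of rank~$1$; together with the transitivity~(1) this forces $I(M)$ to be the set of all constants. The central technical lemma is that every rank-$2$ element $a\in M$ is regular and $J_a$ contains a rank-$2$ idempotent with image $a\Omega$. To prove it, write $a\Omega=\{x,y\}$, $A_1=a^{-1}(x)$, $A_2=a^{-1}(y)$; both $A_i$ are non-empty, so connectedness of $\Gamma$ supplies a crossing edge $\{a_1,a_2\}$ with $a_i\in A_i$, and edge transitivity supplies $b\in M$ with $\{b(x),b(y)\}=\{a_1,a_2\}$. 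If $b(x)=a_1$ and $b(y)=a_2$ one verifies $aba=a$ directly, making $ab$ a rank-$2$ idempotent in $J_a$; in the opposite orientation $aba$ instead swaps the labels on the image, but the identity $ababa=a$ still holds (by tracking membership in the blocks $A_1,A_2$), which makes $a$ regular with regular inverse $bab$ and exhibits $abab$ as a rank-$2$ idempotent in $J_a$.

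This lemma yields the $\J$-class structure. Any two rank-$2$ idempotents $e,e'$ with the same image fix that image pointwise, so $ee'=e'$ and $e'e=e$, whence $e\R e'$. For idempotents with different images, one transports the image of $e$ onto that of $e'$ via edge transitivity to form a rank-$2$ element $ne$, applies the lemma to obtain a rank-$2$ idempotent in $J_{ne}$ with image $e'\Omega$, and concludes via the same-image case together with $J_{ne}\leq J_e$. By symmetry all rank-$2$ idempotents are $\J$-equivalent, so by the lemma every rank-$2$ element lies in a single regular $\J$-class $J$, which is minimal in $M\setminus I(M)$ because $2$ is the minimum rank strictly above $I(M)$. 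Condition~(3) is automatic: $e\Omega$ has two elements, so $\Aug(\mathbb{F}e\Omega)$ is one-dimensional and hence simple. For $\Gamma(M)=\Gamma$, edges of $\Gamma(M)$ are precisely the images of rank-$2$ idempotents of $J$ (these are the only two-element subsets fixed pointwise by such an idempotent), hence edges of $\Gamma$; conversely, for any edge $\{u,v\}\in E(\Gamma)$, edge transitivity combined with the lemma applied to $pa$ (where $p(\{x,y\})=\{u,v\}$) produces a rank-$2$ idempotent in $J$ with image $\{u,v\}$. Condition~(5) follows because $\Gamma(M)=\Gamma$ inherits connectedness from~$\Gamma$.

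The main obstacle is the regularity argument when $a(x)=a(y)$: the idempotent power $a^\omega$ then collapses to a constant, so one cannot argue directly via powers of $a$, and edge transitivity alone does not control the orientation of $b$ on $\{x,y\}$. The saving observation is the identity $ababa=a$ in the wrong-orientation subcase, which allows $bab$ to serve as the regular inverse of $a$.
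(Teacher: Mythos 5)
Your overall strategy tracks the paper's quite closely, and most of the details check out: producing a constant map from conditions (2)--(4) and invoking transitivity to identify $I(M)$ with the constants; proving regularity of a rank-$2$ element via a crossing edge between its two kernel classes (the paper's version of your central lemma takes $k$ with $\{k(w_1),k(w_2)\}=\{z_1,z_2\}$ and observes that $n=(fk)^2$ is an idempotent with $nf=f$, which absorbs your two orientation cases into a single computation, but your explicit $ababa=a$ identity in the swapped case is correct); gluing all rank-$2$ elements into one regular $\J$-class; condition (3) via $\dim\Aug(\mathbb{F}e\Omega)=1$; and the two inclusions giving $\Gamma(M)=\Gamma$ and hence condition (5).

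There is, however, one genuine gap. You establish that $J$ is \emph{a} minimal $\J$-class of $M\setminus I(M)$, but the proposition (feeding into condition (2) of Theorem~\ref{aug-simple-monoids}) requires that $J$ be the \emph{unique} minimal $\J$-class, i.e.\ that $J\leq J_m$ for every $m\in M\setminus I(M)$. Your justification --- ``$2$ is the minimum rank strictly above $I(M)$'' --- only rules out $\J$-classes strictly below $J$; since rank is merely monotone along the $\J$-order, it does not rule out a non-constant $m$ of larger rank with $MmM$ containing no rank-$2$ element, which would yield a second, incomparable minimal $\J$-class. The missing step, which is where the paper deploys edge transitivity against an \emph{arbitrary} non-constant $m$, is this: since $m$ is not constant and $\Gamma$ is connected, some edge $\{u_1,u_2\}$ satisfies $mu_1\neq mu_2$, and then $\{mu_1,mu_2\}$ is again an edge; taking $f\in J$ with $\im f=\{w_1,w_2\}$ and $g,h\in M$ with $\{g(w_1),g(w_2)\}=\{u_1,u_2\}$ and $\{h(mu_1),h(mu_2)\}=\{w_1,w_2\}$, the element $hmgf$ has rank $2$ and lies in $MmM$, whence $J=J_{hmgf}\leq J_m$. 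This is exactly your own transport technique; it just has to be applied to every non-constant $m$, not only to rank-$2$ idempotents, and with your result that all rank-$2$ elements already form the single class $J$ the conclusion is immediate.
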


\begin{proof}
Let $f\in I(M)$. Suppose that $f$ is not a constant map. Then, we have $2\leq\abs{\im f}$. Since the graph $\Gamma$ is connected, there exist
elements $w_1,w_2\in \Omega$ such that $\{w_1,w_2\}\in E(\Gamma)$ and $w_1,w_2\in \im f$. By (2), there is an element $g\in M$ such that $\{g(w_1),g(w_2)\}=\{v_1,v_2\}$ with $v_1$ and $v_2$ as in (3). Hence, the rank of $mgf$ is strictly smaller than the rank of $f$. This is a contradiction with the assumption that $f\in I(M)$. Therefore, $I(M)$ is the set of all constant maps (using (1)).

Let $J=\{j\in M\mid  \rk(j)=2\}$. By (4), the set $J$ is non-empty. We prove that $J$ is the unique minimal $\J$-class of $M\setminus I(M)$ and is regular. Let $f\in J$ and $m\in M\setminus I(M)$. Since $m\not \in I(M)$ and the graph $\Gamma$ is connected, there exists an edge $\{u_1,u_2\}\in E(\Gamma)$ such that $m(u_1)\neq m(u_2)$. Let $\im f=\{w_1,w_2\}$. Since $f\not \in I(M)$ and the graph $\Gamma$ is connected, there is an edge between $w_1$ and $w_2$. By (2), there exist elements $g,h\in M$ such that $\{g(w_1),g(w_2)\}=\{u_1,u_2\}$ and $\{h(m(u_1)),h(m(u_2))\}=\{w_1,w_2\}$. Therefore, we have $\im hmgf=\{w_1,w_2\}$ and $\ker f=\ker hmgf$. Thus, $f\HH hmgf$ in $T_{\Omega}$.
If $f$ is regular then $f\HH hmgf$ in $M$ and thus, $J$ is a minimal $\J$-class of $M\setminus I(M)$ which is regular. Now, we prove that $f$ is regular. Since the graph $\Gamma$ is connected, there exists an edge $\{z_1,z_2\}\in E(\Gamma)$ such that $\{f(z_1),f(z_2)\}=\{w_1,w_2\}$. By (2), there exists an element $k\in M$ such that $\{k(w_1),k(w_2)\}=\{z_1,z_2\}$. Hence, we have $\{fk(w_1),fk(w_2)\}=\{w_1,w_2\}$. It follows that the element $n=(fk)^2$ is an idempotent. Now, as $f=nf$, we have $f\R n$ and, so, $f$ is regular.

Let $e\in E(J)$. Since the rank of $e$ is $2$, $M$ satisfies condition (3) of Theorem~\ref{aug-simple-monoids}.
Now, suppose that $\{s_1,s_2\}\in E(\Gamma)$, for some elements $s_1,s_2\in \Omega$, and let $\{w_1,w_2\}=\im f$ with $f\in J$. Note that $\{w_1,w_2\}\in E(\Gamma)$ since $\Gamma$ is connected. By (2), there is $m$ in $M$ such that $\{m(w_1),m(w_2)\}=\{s_1,s_2\}$. As the rank of the element $mf$ is $2$, we have $mf\in J$. Hence, we have $\{s_1,s_2\}\in E(\Gamma(M))$ because $mf\R e$, for some $e\in E(J)$, and $emfw_i=s_i$. It follows that the graph $\Gamma$ is a subgraph of the graph $\Gamma(M)$. As the graph $\Gamma$ is connected, the graph $\Gamma(M)$ is connected, too.
Hence, $M$ satisfies condition (5) of Theorem~\ref{aug-simple-monoids}.
Now, we prove that $\Gamma(M)=\Gamma$. 	
If $f\in E(J)$, then $\im f$ is connected, as $\Gamma$ is connected. So, if $\im f=\{s_1,s_2\}$, for some elements $s_1,s_2\in \Omega$, then $\{s_1,s_2\}\in E(\Gamma)$. Hence, the graph $\Gamma(M)$ is a subgraph of the graph $\Gamma$.
\end{proof}

\begin{thm}\label{simplicial-complex-GraGa}
Let $\mathcal{K}=(\Omega,\mathcal{F})$ be a connected simplicial complex with $2\leq\abs{\Omega}$ and let $M=\End(\mathcal{K})$.
The following items hold:
\begin{enumerate}
\item The augmentation submodule $\Aug(\mathbb{F}\Omega)$ is simple, for every field $\mathbb{F}$.
\item We have $\Gamma(M)=\mathcal{K}^1$.
\item If $M$ is $2$-transitive, then the graph $\mathcal{K}^1$ is complete.
\end{enumerate}
\end{thm}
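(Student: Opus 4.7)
The plan is to reduce the theorem to a direct application of Proposition~\ref{GraGa} together with Theorem~\ref{aug-simple-monoids}, applied to the graph $\Gamma=\mathcal{K}^1$ and the monoid $M=\End(\mathcal{K})$. The first thing to verify is the basic inclusion $\End(\mathcal{K})\subseteq \End(\mathcal{K}^1)$: if $f$ is simplicial on $\mathcal{K}$ and $\{a,b\}\in\mathcal{F}$, then $f(\{a,b\})\in\mathcal{F}$ is either a single vertex or an edge of $\mathcal{K}^1$, so $f$ is a simplicial map of the graph $\mathcal{K}^1$. The crucial elementary observation driving the rest of the argument is that any self-map of $\Omega$ whose image is contained in some simplex $X\in\mathcal{F}$ automatically belongs to $\End(\mathcal{K})$, because then $g(Y)\subseteq X\in\mathcal{F}$ for every $Y\in\mathcal{F}$, and $\mathcal{F}$ is closed under taking subsets.

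From this observation I would verify the four hypotheses of Proposition~\ref{GraGa}. Every constant map lies in $M$, so $M$ is transitive on $\Omega$. Given any two edges $\{v_1,v_2\},\{w_1,w_2\}\in\mathcal{F}$, the map sending $v_1\mapsto w_1$, $v_2\mapsto w_2$, and every other vertex to $w_1$ has image contained in the simplex $\{w_1,w_2\}$ and so lies in $M$, yielding edge-transitivity. A constant map collapsing an edge witnesses condition (3), and any fold sending $\Omega$ onto the two endpoints of some edge is a rank-$2$ element of $M$, yielding condition (4). Proposition~\ref{GraGa} then produces conditions (1), (2), (3), and (5) of Theorem~\ref{aug-simple-monoids} together with the equality $\Gamma(M)=\mathcal{K}^1$, which is exactly part (2) of the statement to be proved.

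It remains, for part (1), to verify condition (4) of Theorem~\ref{aug-simple-monoids}, namely that the incidence matrix of the set system of fibers $f^{-1}f\omega$, with $f\in E(J)$, has rank $|\Omega|$ over $\mathbb{F}$. Because $\mathcal{K}^1$ is connected and $|\Omega|\geq 2$, every $v\in\Omega$ lies on some edge $\{v,b\}\in\mathcal{F}$; the fold idempotent $f$ fixing $v$ and $b$ and sending every other vertex to $b$ is then a rank-$2$ simplicial idempotent of $M$ with $f^{-1}(v)=\{v\}$. Hence every singleton appears in the set system, and by the final remark of the subsection on incidence matrices the rank is $|\Omega|$. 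Theorem~\ref{aug-simple-monoids} now yields the simplicity of $\Aug(\mathbb{F}\Omega)$ over every field~$\mathbb{F}$, giving (1). Part (3) is immediate from part (2) combined with Proposition~\ref{2-tran}, which ensures that $\Gamma(M)$ is complete whenever $M$ is $2$-transitive.

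The only real insight required is recognizing that $\End(\mathcal{K})$ is automatically rich enough---because it contains every map factoring through a simplex---to satisfy both the combinatorial hypotheses of Proposition~\ref{GraGa} and the incidence-matrix condition; beyond that, the proof is a short verification using already-proved results.
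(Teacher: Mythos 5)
Your proposal is correct and follows essentially the same route as the paper: both verify the hypotheses of Proposition~\ref{GraGa} using the fold maps with image inside a single simplex (the paper's $f_{z_1,z_2,z_3}$), establish condition (4) of Theorem~\ref{aug-simple-monoids} by observing that every singleton $\{v\}$ arises as a fiber $f^{-1}_{v,b,v}(v)$ of a rank-$2$ idempotent, and deduce part (3) from $\Gamma(M)=\mathcal{K}^1$ via Proposition~\ref{2-tran}. The only cosmetic difference is that you make explicit the general observation that any map factoring through a simplex lies in $\End(\mathcal{K})$, which the paper uses implicitly.
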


\begin{proof}
(1) Since $M=\End(\mathcal{K})$, we have $M\leq\End(\mathcal{K}^1)$.
Obviously, $I(M)$ consists of all constant maps and hence $M$ is transitive on $\Omega$ and satisfies (1) and (3) of Proposition~\ref{GraGa}.

Let $z_1,z_2,z_3\in \Omega$ and suppose that there is an edge between $z_1$ and $z_2$ in the graph $\mathcal{K}^1$.
We define the map $f_{z_1,z_2,z_3}$, for every $x\in\Omega$, as follows:
\begin{equation*}
f_{z_1,z_2,z_3}(x)= \begin{cases}
 z_1& \text{if}\ x=z_3;\\
 z_2& \text{if}\ x\in \Omega\setminus\{z_3\}.
\end{cases}
\end{equation*}
Obviously $f_{z_1,z_2,z_3}\in M$, since its image is a simplex. Now, let $\{v_1,v_2\},\{w_1,w_2\}\in E(\mathcal{K}^1)$. We have $$\{f_{w_1,w_2,v_1}(v_1),f_{w_1,w_2,v_1}(v_2)\}=\{w_1,w_2\}.$$
Thus $M$ is edge transitive. It is clear that the rank of $f_{w,v,v}$ is $2$, for every $v,w\in\Omega$ for which there is an edge between $v$ and $w$. Therefore by Proposition~\ref{GraGa}, $I(M)$ is the set of all constant maps of $M$ and $M\setminus I(M)$ has a minimal $\J$-class $J$ which is regular, contains all elements of $M$ with rank two and satisfies conditions (3) and (5) of Theorem~\ref{aug-simple-monoids}.
Let $v_1\in\Omega$. Since $\mathcal{K}$ is connected, there exists $\{v_1,v_2\}\in E(\mathcal{K}^1)$. Thus, $f_{v_1,v_2,v_1}\in E(J)$ and $f^{-1}_{v_1,v_2,v_1}(v_1)=\{v_1\}$. Hence, monoid $M$ satisfies condition (4) of the Theorem~\ref{aug-simple-monoids}. Therefore the augmentation submodule $\Aug(\mathbb{F}\Omega)$ is simple, for every field $\mathbb{F}$.

(2) In part (1), we showed that $M$ satisfies conditions of Proposition~\ref{GraGa} with $\Gamma=\mathcal{K}^1$. Hence, we have $\Gamma(M)=\mathcal{K}^1$.

(3) By part (2), we have $\Gamma(M)=\mathcal{K}^1$ Now, as $M$ is $2$-transitive, by Proposition~\ref{2-tran}, the graph $\mathcal{K}^1$ is complete.
\end{proof}

In this paper, by the endomorphism monoid of a graph we always mean its endomorphism monoid as a simplicial complex.

\begin{cor}
Let $\Gamma=(\Omega, E(\Gamma))$ be a connected graph with $2\leq\abs{\Omega}$ and let $M=\End(\Gamma)$.
The following items hold:
\begin{enumerate}
\item The augmentation submodule $\Aug(\mathbb{F}\Omega)$ is simple, for every field $\mathbb{F}$.
\item Suppose that $M'\leq T_{\Omega}$ satisfies conditions (1), (2) of Theorem~\ref{aug-simple-monoids} and $\Gamma(M')=\Gamma$. Then, $M'$ is a submonoid of $\End(\Gamma)$.
\end{enumerate}
\end{cor}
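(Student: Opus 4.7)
My plan is to derive both parts of the corollary directly from results already established in the paper, with essentially no new work required. For part (1), the key observation is that any graph $\Gamma$ is itself a simplicial complex of dimension at most $1$ (as noted explicitly in the preliminaries), and the convention stated just before the corollary identifies $\End(\Gamma)$ with the simplicial-complex endomorphism monoid of $\Gamma$. Since $\Gamma$ is connected with $2\leq\abs{\Omega}$, I can invoke Theorem~\ref{simplicial-complex-GraGa}(1) verbatim, yielding simplicity of $\Aug(\mathbb{F}\Omega)$ over every field $\mathbb{F}$.

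For part (2), the strategy is to apply Lemma~\ref{GraphGamma}(1), which says that any monoid satisfying the hypothesis of that lemma acts on its associated graph by simplicial maps. So the first step is to verify that the hypothesis of Lemma~\ref{GraphGamma} does hold for $M'$. Condition (2) of Theorem~\ref{aug-simple-monoids} gives directly that $M'\setminus I(M')$ has a unique minimal $\J$-class which is regular. For the requirement that $I(M')$ consist of constant maps, I would note that by condition (1) of Theorem~\ref{aug-simple-monoids}, $M'$ contains a constant map, and in any finite transformation monoid a constant map lies in the minimum $\J$-class (rank is non-decreasing along the $\J$-order, and constant maps attain the minimum possible rank $1$); hence $I(M')$ coincides with the set of constant maps in $M'$.

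With the hypothesis of Lemma~\ref{GraphGamma} verified, part~(1) of that lemma immediately gives that $M'$ acts on $\Gamma(M')$ by simplicial maps. Using the assumption $\Gamma(M')=\Gamma$, this says precisely that every $m\in M'$ is a simplicial map on $\Gamma$, i.e., $m\in\End(\Gamma)$. Therefore $M'\subseteq\End(\Gamma)$, as required. I do not anticipate any real obstacle here; the only mildly subtle point is the identification of graph endomorphism monoids with simplicial-complex endomorphism monoids, which is fixed by the convention declared just before the statement of the corollary.
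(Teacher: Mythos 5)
Your proof is correct and follows essentially the same route as the paper: part (1) is exactly the paper's appeal to Theorem~\ref{simplicial-complex-GraGa} via the convention that a graph is a $1$-dimensional simplicial complex, and part (2) is the paper's appeal to Lemma~\ref{GraphGamma}(1) applied to $M'$ with $\Gamma(M')=\Gamma$. Your extra check that condition (1) of Theorem~\ref{aug-simple-monoids} forces $I(M')$ to consist of constant maps (so that Lemma~\ref{GraphGamma} applies) is a detail the paper leaves implicit, and it is argued correctly.
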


\begin{proof}
(1) Since the graph $\Gamma$ is a simplicial complex, by Theorem~\ref{simplicial-complex-GraGa}, the augmentation submodule $\Aug(\mathbb{F}\Omega)$ is simple, for every field $\mathbb{F}$.

(2) By Lemma~\ref{GraphGamma}, the monoid $M'$ acts on $\Gamma(M')$ by simplicial maps. So $M'\leq \End(\Gamma)$.
Now, by Lemma~\ref{GraphGamma}, since $\Gamma(M)=\Gamma$, $M'$ is a submonoid of $M$.
\end{proof}

In other words, transformation monoids of the form $\End(\Gamma)$ with $\Gamma$ a connected graph are universal amongst those with simple augmentation. Since for $3\leq n$, there are connected graphs that are not complete, this shows there are many transitive transformation monoids with simple augmentation over $\mathbb{C}$ that are not $2$-transitive.

\begin{prop}\label{GraGa2}
Suppose that $I(M)$ is the set of all constant maps of $M$ and $M\setminus I(M)$ has a minimal $\J$-class $J$ which is regular.
If there is a partial order on the set $\Omega$ and a function $$\phi\colon \Omega\rightarrow \{ B\mid B= f^{-1}f(\omega), \mbox{ for some }\omega\in\Omega\mbox{ and }f\in E(J)\}$$ such that $\alpha$ is the unique minimum element of $\phi(\alpha)$, for every $\alpha\in\Omega$, then $M$ satisfies condition (4) of Theorem~\ref{aug-simple-monoids}.
\end{prop}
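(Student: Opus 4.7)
The plan is to exhibit an $|\Omega| \times |\Omega|$ submatrix of $I(\mathcal{E})$ (where $\mathcal{E}$ is the set system from condition~(4)) that is triangular with ones on the diagonal, so invertible, forcing the incidence matrix to have rank $|\Omega|$ over any field.

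First I would note that $\phi$ must be injective: if $\phi(\alpha)=\phi(\beta)$, then $\alpha$ and $\beta$ are both the unique minimum of this common set, so $\alpha=\beta$. Next I would fix a linear extension $\leq^*$ of the given partial order on $\Omega$ and list the elements as $\alpha_1<^*\alpha_2<^*\cdots<^*\alpha_n$, where $n=|\Omega|$. I would then look at the $n\times n$ submatrix $A$ of $I(\mathcal{E})$ whose rows are indexed by $\alpha_1,\dots,\alpha_n$ and whose columns are indexed by $\phi(\alpha_1),\dots,\phi(\alpha_n)$ (which are distinct by injectivity of $\phi$).

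The key observation is the shape of $A$. The entry $A_{ij}=I(\mathcal{E})_{\alpha_i,\phi(\alpha_j)}$ equals $1$ precisely when $\alpha_i\in \phi(\alpha_j)$. Since $\alpha_j$ is the unique minimum of $\phi(\alpha_j)$, membership $\alpha_i\in\phi(\alpha_j)$ forces $\alpha_j\leq\alpha_i$, hence $\alpha_j\leq^*\alpha_i$, i.e.\ $j\leq i$. Therefore $A_{ij}=0$ whenever $j>i$, so $A$ is lower triangular. On the diagonal, $A_{ii}=1$ because $\alpha_i$ itself lies in $\phi(\alpha_i)$ (it is even the minimum element of that set). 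Thus $\det A=1$ in $\mathbb{F}$, so $A$ is invertible over $\mathbb{F}$ regardless of the characteristic.

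Since $I(\mathcal{E})$ contains an invertible $n\times n$ submatrix, its rank over $\mathbb{F}$ is at least $n=|\Omega|$; it cannot exceed $|\Omega|$ as the matrix has $|\Omega|$ rows, so the rank is exactly $|\Omega|$, giving condition~(4). There is essentially no obstacle here: once one spots that $\phi$ provides an injective selection of columns with the right compatibility between membership and the partial order, the triangular structure drops out immediately. The only thing one has to be careful about is choosing a linear extension of the partial order, rather than trying to work with the partial order directly as a matrix ordering.
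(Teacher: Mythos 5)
Your proof is correct and follows essentially the same route as the paper: order $\Omega$ by a linear extension of the partial order, and observe that the columns $\phi(\alpha_1),\dots,\phi(\alpha_n)$ give a lower triangular submatrix with ones on the diagonal, hence full rank over any field. Your explicit remarks on the injectivity of $\phi$ and the unit diagonal are just slightly more detailed versions of what the paper leaves implicit.
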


\begin{proof}
Let $F=\{B\mid B= f^{-1}f(\omega), \mbox{ for some }\omega\in\Omega\mbox{ and }f\in E(J)\}$.
Suppose that there is a partial order $\leq$ on the set $\Omega$ and a function $\phi\colon \Omega\rightarrow F$ such that $\alpha$ is the unique minimum element of $\phi(\alpha)$, for every $\alpha\in\Omega$.
Order $\Omega=\{\omega_1,\ldots,\omega_{\abs{\Omega}}\}$ so that if $\omega_i\leq \omega_j$ then $i\leq j$. Then the incidence matrix $\Omega\times F$ has a lower triangular submatrix with $\abs{\Omega}$ columns and diagonal entries equal to one by considering the columns in the image of $\phi$ ordered $\phi(\omega_1),\ldots,\phi(\omega_{\abs{\Omega}})$. Thus the rank of the incidence matrix of $F$ is $\abs{\Omega}$ over $\mathbb{F}$.
It follows that $M$ satisfies condition (4) of Theorem~\ref{aug-simple-monoids}.
\end{proof}

\begin{thm}\label{Digraph-Simplicity}
Let $\Delta=(\Omega, E(\Delta))$ be an acyclic connected digraph with $2\leq\abs{\Omega}$ and
\begin{align*}
M=\End(\Delta)=\{f\colon \Delta\rightarrow\Delta\mid~& f\mbox{ is a simplicial map on }\Delta \mbox{ and } f \mbox{ preserves }\\
&\mbox{ orientation }\}.
\end{align*}
Thus the augmentation submodule $\Aug(\mathbb{F}\Omega)$ is simple, for every field $\mathbb{F}$.
\end{thm}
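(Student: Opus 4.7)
The approach is to verify the five conditions of Theorem~\ref{aug-simple-monoids} for $M = \End(\Delta)$. The central device is the reachability partial order $\leq$ on $\Omega$, well defined because $\Delta$ is acyclic: $x \leq y$ iff there is a directed path in $\Delta$ from $x$ to $y$. For each directed edge $u \to v$ of $\Delta$ and each proper non-empty upper set $B$ of $(\Omega,\leq)$, the map $g_{u,v,B}\colon \Omega \to \Omega$ sending $B^c$ to $u$ and $B$ to $v$ is an element of $M$: upper-closedness of $B$ forces every edge of $\Delta$ between $B^c$ and $B$ to run from $B^c$ to $B$, and each such cross-edge is mapped to the edge $u \to v$. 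This map is idempotent precisely when $u \notin B$ and $v \in B$, in which case I abbreviate $e_{u,v} := g_{u,v,\{y : y \geq v\}}$.

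Conditions (1), (3), (4) and (5) are easy. Constant maps vacuously lie in $M$ and satisfy $c_\omega c_{\omega'} = c_\omega$, so $I(M)$ is the minimal ideal of all constant maps (condition~(1)). Each $e_{u,v}$ fixes both $u$ and $v$, so the underlying undirected graph of $\Delta$ embeds into $\Gamma(M)$, which is therefore connected (condition~(5)). For (3), every element of $M$ preserves reachability, so $G_e$ acts on $e\Omega = \{u,v\}$ by order-preserving bijections; as $u < v$, no such bijection swaps $u$ and $v$, so $G_e$ is trivial and $\Aug(\mathbb{F}e\Omega)$ is one-dimensional, hence simple. Condition (4) follows from Proposition~\ref{GraGa2} applied to $\leq$: if $\alpha \in \Omega$ is not a source, choose an in-neighbour $u$ of $\alpha$ and set $\phi(\alpha) := e_{u,\alpha}^{-1}(\alpha) = \{y : y \geq \alpha\}$, in which $\alpha$ is the unique minimum; if $\alpha$ is a source, then $\Omega \setminus \{\alpha\}$ is an upper set and, choosing any out-neighbour $v$ of $\alpha$ (which exists by connectedness of $\Delta$ and $\abs{\Omega} \geq 2$), the idempotent $g_{\alpha,v,\Omega \setminus \{\alpha\}}$ has singleton fibre $\{\alpha\}$ over $\alpha$, so set $\phi(\alpha) := \{\alpha\}$.

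The main step is condition~(2). Any rank-$2$ element $f \in M$ has image $\{u',v'\}$ with $u' \to v'$ an edge of $\Delta$, and $B' := f^{-1}(v')$ is an upper set, so $f = g_{u',v',B'}$. Fix any edge $u \to v$ and set $b := g_{u,v,B'}$ and $a := g_{u',v',\{y : y \geq v\}}$; both lie in $M$. Since $e_{u,v}$ fixes $\{u,v\}$ pointwise and $\im b \subseteq \{u,v\}$, the product $a\, e_{u,v}\, b$ equals $a \circ b$, which equals $f$ by direct inspection of values. Hence $f \in M e_{u,v} M$. For the reverse inclusion, use connectedness of $\Delta$ and the upper-set property of $B'$ to pick a cross-edge $u'' \to v''$ of $\Delta$ with $u'' \in (B')^c$ and $v'' \in B'$; then $b' := g_{u'',v'',\{y : y \geq v\}}$ and $a' := g_{u,v,\{y : y \geq v'\}}$ satisfy $a' f b' = e_{u,v}$ by the same calculation, giving $e_{u,v} \in MfM$. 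Therefore $f \J e_{u,v}$, the rank-$2$ elements form a single $\J$-class $J$, and $J$ is regular because it contains the idempotents $e_{u,v}$. Theorem~\ref{aug-simple-monoids} then delivers the simplicity of $\Aug(\mathbb{F}\Omega)$.

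The main obstacle is condition~(2): $M$ need not be edge-transitive on $\Delta$, so Proposition~\ref{GraGa} cannot be invoked directly. The key observation making the $\J$-class argument work is that the canonical idempotents $e_{u,v}$ are the identity on their image, which allows the threefold composition $a\, e_{u,v}\, b$ to collapse to $a \circ b$ and reduces the problem to choosing $a$ and $b$ with the right image/upper-set data. Acyclicity of $\Delta$ is essential throughout: it makes $\leq$ a partial order so that the notion of upper set is meaningful, and it ensures that all cross-edges of an upper set run in the same direction, so that maps such as $g_{u',v',\{y : y \geq v\}}$ really are endomorphisms.
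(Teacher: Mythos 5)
Your verification of conditions (1), (3), (4) and (5) matches the paper's in substance, and your direct computation that all rank-$2$ elements of $\End(\Delta)$ are mutually $\J$-equivalent is correct: the factorizations $f=a\,e_{u,v}\,b$ and $e_{u,v}=a'fb'$ both check out, using acyclicity to see that $f^{-1}(v')$ is an upper set and that $u'\not\geq v'$. However, the premise on which you abandoned the paper's route is mistaken, and your replacement argument leaves one piece of condition (2) unverified.

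On the premise: condition (2) of Proposition~\ref{GraGa} asks only for \emph{unordered} edge-transitivity, i.e.\ some $m$ with $\{m(v_1),m(v_2)\}=\{w_1,w_2\}$. Given directed edges $v_1\to v_2$ and $w_1\to w_2$, the paper takes $U=\{w\mid v_2\leq w\}$ and notes that $f_{w_1,w_2,U}(v_1)=w_1$ and $f_{w_1,w_2,U}(v_2)=w_2$ (acyclicity gives $v_1\notin U$). So $\End(\Delta)$ \emph{is} edge-transitive in the required sense --- your own maps $g_{u,v,B}$ witness it --- and Proposition~\ref{GraGa} applies directly; that is exactly the paper's proof. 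What orientation-preservation forbids is only the \emph{strong} (ordered, swapping) version, which Proposition~\ref{GraGa} does not need.

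On the gap: condition (2) of Theorem~\ref{aug-simple-monoids} requires that $M\setminus I(M)$ have a \emph{unique} minimal $\J$-class. You show that the rank-$2$ elements form a single regular $\J$-class $J$, and $J$ is automatically minimal in $M\setminus I(M)$ because rank is constant on $\J$-classes and non-increasing under multiplication; but you never exclude a second, incomparable minimal $\J$-class of higher rank. You must check that $J\leq J_m$ for every non-constant $m\in M$. This is easy with the tools you already have: since $m$ is non-constant and $\Delta$ is connected, there is a directed edge $u_1\to u_2$ with $m(u_1)\neq m(u_2)$, and then $m\,g_{u_1,u_2,\{y\mid y\geq u_2\}}$ has rank $2$ and lies in $MmM$, so $J\subseteq MmM$. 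With that line added your argument is complete. Note that Proposition~\ref{GraGa} takes care of this uniqueness automatically, which is a further reason the paper routes through it.
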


\begin{proof}
Since the graph $\Delta$ is acyclic, we can define a partial order
on $\Omega$ by $\omega<\omega'$ if there is a directed path $\omega$ to $\omega'$ in the digraph $\Delta$.
Let $\alpha,\beta\in \Omega$ with a directed edge from $\alpha$ to $\beta$ and $U$ be an upper set of $\Omega$ with $U\neq \Omega$. We define the map $f_{\alpha,\beta,U}\colon \Omega\rightarrow\Omega$, for every $x\in\Omega$, as follows:
\begin{equation*}
 f_{\alpha,\beta,U}(x)= \begin{cases}
 \beta& \text{if}\ x\in U;\\
 \alpha& \text{if}\ x\not\in U.
\end{cases}
\end{equation*}
It is clear that $f_{\alpha,\beta,U}\in M$.
Also, it is clear that $M$ contains all constant maps and, thus, $M$ satisfies (1) and (3) of Proposition~\ref{GraGa} for $\Gamma$ the underlying graph of $\Delta$.
Suppose that there are directed edges $v_1$ to $v_2$ and $w_1$ to $w_2$.
Let $U=\{w\in\Omega\mid v_2\leq w\}$.
We have $U \subsetneq\Omega$ is an upper set, $v_1\not\in U$ and $v_2\in U$.
Hence, $$f_{w_1,w_2,U}(v_1)=w_1\mbox{ and }f_{w_1,w_2,U}(v_2)=w_2.$$
Thus $M$ is edge transitive.
It is clear that the rank of $f_{v,w,U}$ is $2$, for every $v,w\in\Omega$ with a directed edge $v$ to $w$ and upper set $U \subsetneq\Omega$. Therefore, $M$ satisfies all conditions of Proposition~\ref{GraGa} and, thus, $I(M)$ is the set of all constant maps of $M$ and $M\setminus I(M)$ has a minimal $\J$-class $J$ which is regular, contains all elements of $M$ with rank two and satisfies conditions (3) and (5) of Theorem~\ref{aug-simple-monoids}.
Let $\alpha\in\Omega$.
If $\alpha$ is the unique minimum element of $\Omega$, then the element $\alpha$ is minimum in $f^{-1}(f(\alpha))$ for any $f\in E(J)$.
Assume $\alpha$ is not a unique minimum. There are two cases. If $\alpha$ is minimal, then since $\Delta$ is connected, there exists an element $\beta\in\Omega$ with a directed edge $\alpha$ to $\beta$. Let $U=\Omega\setminus\{\alpha\}$. Then $U$ is an upper set, $f_{\alpha,\beta,U}$ is an idempotent and $f_{\alpha,\beta,U}^{-1}(\alpha)=\{\alpha\}$ has $\alpha$ as a unique minimum. The second case is there exists an element $\beta'\in\Omega$ such that there is a directed edge $\beta'$ to $\alpha$. Hence, we have $f_{\beta',\alpha,\alpha^\uparrow}\in E(J)$ where $\alpha^\uparrow=\{\gamma\mid \alpha\leq \gamma\}$. It follows that $\alpha$ is the unique minimum element of the set $f_{\beta',\alpha,\alpha^\uparrow}^{-1}(\alpha)$.
Therefore, the partial order $\leq$ satisfies conditions of Proposition~\ref{GraGa2} and, thus, the augmentation submodule $\Aug(\mathbb{F}\Omega)$ is simple, for every field $\mathbb{F}$.
\end{proof}

An important special case is that of a poset.

\begin{cor}\label{connected-Hasse-diagram}
Let $(\Omega,\leq)$ be a finite poset
with a connected Hasse diagram and let $M$ be the transformation monoid on $\Omega$ as follows:
$$M=\{f\colon \Omega\rightarrow\Omega \mid\mbox{if }\alpha\leq\beta,\mbox{ then }f(\alpha)\leq f(\beta),\mbox{ for every }\alpha,\beta\in\Omega\}.$$
The augmentation submodule $\Aug(\mathbb{F}\Omega)$ is simple, for every field $\mathbb{F}$.
\end{cor}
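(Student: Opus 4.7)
My plan is to deduce the result from Theorem~\ref{Digraph-Simplicity} by replacing the Hasse diagram with the comparability digraph of $(\Omega,\leq)$. The first step is to form the digraph $\Delta=(\Omega,E(\Delta))$ whose directed edges are exactly the pairs $(\alpha,\beta)$ with $\alpha<\beta$. Antisymmetry and transitivity of $\leq$ immediately force $\Delta$ to be acyclic, and since the Hasse diagram sits inside the underlying undirected graph of $\Delta$ as a spanning subgraph, the assumed connectivity of the Hasse diagram at once yields connectivity of $\Delta$.

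The second step is to verify that the order-preserving monoid $M$ in the statement is literally $\End(\Delta)$ in the sense used in Theorem~\ref{Digraph-Simplicity}. If $f$ is order-preserving and $\alpha<\beta$ in $\Omega$, then $f(\alpha)\leq f(\beta)$, so either $f$ collapses the edge $\alpha\to\beta$ or it sends it to another edge $f(\alpha)\to f(\beta)$ of $\Delta$ with matching orientation; conversely, a simplicial orientation-preserving self-map of $\Delta$ sends each strict comparability to a non-strict one, hence is order-preserving.

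With these two identifications in hand, Theorem~\ref{Digraph-Simplicity} applies directly to $\Delta$ and gives simplicity of $\Aug(\mathbb{F}\Omega)$ over every field $\mathbb{F}$. The one point I expect to require care is the choice of digraph: the Hasse diagram itself is too sparse, because an order-preserving map can carry a cover to a strictly comparable pair that is not a cover, so a simplicial map on the Hasse diagram would be a strictly stronger condition than order preservation. Passing to the comparability digraph cures this while preserving both acyclicity and connectedness, and no further work is needed beyond invoking the preceding theorem.
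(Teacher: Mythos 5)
Your proposal is correct and matches the paper's own proof essentially verbatim: both pass from the Hasse diagram to the full comparability digraph (edges $\alpha\to\beta$ for $\alpha<\beta$), observe that it is acyclic and connected and that $M=\End(\Delta)$, and then invoke Theorem~\ref{Digraph-Simplicity}. Your extra remark on why the Hasse diagram itself would be too sparse is a sound clarification but does not change the argument.
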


\begin{proof}
We define a digraph $\Delta$ as follows:
\begin{enumerate}
\item $V(\Delta)=\Omega$;
\item there is an edge from $\omega$ to $\omega'$ in the graph $\Delta$ if and only if $\omega<\omega'$.
\end{enumerate}
Then, $\Delta$ contains the Hasse diagram of $\Omega$ as a subgraph and so $\Delta$ is connected. Also, since $(\Omega,\leq)$ is a poset, $\Delta$ is acyclic. Clearly, $M=\End(\Delta)$. Now, by Theorem~\ref{Digraph-Simplicity}, the result follows.
\end{proof}

Note that the category of finite posets is equivalent to the category of finite $T_0$ topological spaces and that connected posets correspond to connected $T_0$ spaces. So Corollary~\ref{connected-Hasse-diagram} can be reinterpreted as saying the monoid of continuous self-maps of a connected finite $T_0$ space has simple augmentation.

Our next construction shows (4) and (5) of Theorem~\ref{aug-simple-monoids} are independent.
Let $\Omega=\{1,\ldots,n\}$, $\Gamma$ be a connected graph with the vertex set $\Omega$ and $A$ be an $n\times r$ matrix over $\{0,1\}$ with no zero columns or all ones columns.
We define the set of mappings
\begin{align*}
F_{\Gamma,A}=\{f\colon \Omega\rightarrow\Omega\mid&\im f\mbox { is an edge of }\Gamma\mbox{ and }\ker f=\{A_j,\overline{A}_j\},\\
&\mbox{ for some integer } 1\leq j\leq r\}
\end{align*}
and the monoid
$M_{\Gamma,A}=F_{\Gamma,A}\cup C\cup \{1_{\Omega}\}\in \End(\Gamma)$ where $A_j=\{\alpha\in \Omega\mid A_{\alpha j}=1\}$, $\overline{A}_j=\Omega\setminus A_j$ and $C$ is the set of all constant maps on $\Omega$.
Let $f,g\in F_{\Gamma,A}$. If $fg\not\in C$, then $\im fg=\im f$ and $\ker fg=\ker g$, since $\rk(f)=\rk(g)=2$ and, thus, $fg\in F_{\Gamma,A}$. It follows that $M_{\Gamma,A}$ is a transformation monoid. We define the $n\times (r+1)$ matrix $\widetilde{A}=[A\mid C_{r+1}]$ over $\{0,1\}$ where all entries of the column $C_{r+1}$ are one.
Let the rank of matrix $\widetilde{A}$ be $m_A$. Also, for every $\{\alpha,\beta\}\in E(\Gamma)$ and $1\leq j\leq r$, we define the map $f_{\alpha,\beta,j}\in F_{\Gamma,A}$ as follows:
\begin{equation*}
f_{\alpha,\beta,j}(\gamma)= \begin{cases}
\alpha& \text{if}\ \gamma\in A_{j};\\
\beta& \text{otherwise},
\end{cases}
\end{equation*}
for every $\gamma\in\Omega$.

Let $M\leq \End(\Gamma)$. We say that $M$ is strongly edge transitive if $\{\alpha_1,\alpha_2\},$ $\{\beta_1,\beta_2\}\in E(\Gamma)$ implies there is an element $m\in M$ such that $m\alpha_1=\beta_1$ and $m\alpha_2=\beta_2$. For example, if $\Gamma$ is complete, then $M$ is strongly edge transitive if and only if $M$ is $2$-transitive.

\begin{prop}\label{SEgT}
The monoid $M_{\Gamma,A}$ is strongly edge transitive if and only if $\{\alpha,\beta\}\in E(\Gamma)$ implies the rows $\alpha$ and $\beta$ are not equal in the matrix $A$.
\end{prop}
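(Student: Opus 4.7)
The plan is to analyze exactly which elements of $M_{\Gamma,A}$ can realize a given ordered pair $(\alpha_1,\alpha_2)\mapsto(\beta_1,\beta_2)$ on adjacent vertices. Since $\beta_1\neq\beta_2$, such an $m$ has rank at least two, so either $m=1_\Omega$ (only possible when $\alpha_i=\beta_i$) or $m\in F_{\Gamma,A}$. In the latter case $\im m=\{\beta_1,\beta_2\}$ and $\ker m=\{A_j,\overline{A}_j\}$ for some index $j$, so $\alpha_1$ and $\alpha_2$ must lie in different kernel classes of $m$; equivalently, rows $\alpha_1$ and $\alpha_2$ of $A$ differ in column $j$. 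This single observation drives both directions.

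For the forward direction, I would apply strong edge transitivity to the edge $\{\alpha,\beta\}$ viewed first as the ordered pair $(\alpha,\beta)$ and then as $(\beta,\alpha)$, producing $m\in M_{\Gamma,A}$ with $m\alpha=\beta$ and $m\beta=\alpha$. This $m$ is neither constant nor the identity, so $m\in F_{\Gamma,A}$; by the observation above, some column $j$ of $A$ separates $\alpha$ from $\beta$, so rows $\alpha$ and $\beta$ of $A$ are not equal.

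For the converse, I would use the explicit maps $f_{\beta_1,\beta_2,j}$ already built in the paper. Given edges $\{\alpha_1,\alpha_2\}$ and $\{\beta_1,\beta_2\}$ with chosen labellings, if the ordered pairs agree then $1_\Omega$ works. Otherwise the row hypothesis applied to $\{\alpha_1,\alpha_2\}$ yields a column $j$ on which rows $\alpha_1$ and $\alpha_2$ disagree. Since $\{A_j,\overline{A}_j\}$ is a partition of $\Omega$, exactly one of $\alpha_1\in A_j$ or $\alpha_1\in\overline{A}_j$ holds; taking $f_{\beta_1,\beta_2,j}$ in the first case and $f_{\beta_2,\beta_1,j}$ in the second gives an element of $F_{\Gamma,A}$ (well-defined because $\{\beta_1,\beta_2\}\in E(\Gamma)$) that realizes the prescribed ordered pair.

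I do not expect a serious obstacle here: the proposition is essentially a bookkeeping exercise once one notices that the kernel of any rank-two element of $M_{\Gamma,A}$ is forced to be the partition $\{A_j,\overline{A}_j\}$ associated to some column of $A$. The only slightly delicate point is the swap trick in the forward direction, where taking the same edge with its two opposite orderings is what extracts the row-inequality condition and explains why the hypothesis is not merely that \emph{some} pair of rows differ but that rows of \emph{adjacent} vertices do.
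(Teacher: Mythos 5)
Your proposal is correct and follows essentially the same route as the paper: the forward direction uses the swap $m\alpha=\beta$, $m\beta=\alpha$ to force $m\in F_{\Gamma,A}$ and hence a separating column, and the converse selects $f_{\beta_1,\beta_2,j}$ or $f_{\beta_2,\beta_1,j}$ according to which kernel class contains $\alpha_1$. The only difference is that you make the case analysis in the converse explicit where the paper simply asserts the disjunction.
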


\begin{proof}
Suppose that $M_{\Gamma,A}$ is strongly edge transitive and $\{\alpha,\beta\}\in E(\Gamma)$. Hence, there is an element $m\in M_{\Gamma,A}$ such that $m\alpha=\beta$ and $m\beta=\alpha$. By construction, we have $m\in F_{\Gamma,A}$. There is an integer $1\leq j\leq r$ such that $\ker m=\{A_j,\overline{A}_j\}$. It follows that $A_{\alpha j}\neq A_{\beta j}$ and, thus, the rows $\alpha$ and $\beta$ are not equal.

Now, suppose that $\{\alpha,\beta\}\in E(\Gamma)$ implies the rows $\alpha$ and $\beta$ are not equal in the matrix $A$, for every $\{\alpha,\beta\}\in E(\Gamma)$. Let $\{\alpha_1,\alpha_2\},\{\beta_1,\beta_2\}\in E(\Gamma)$ be edges of $\Gamma$. Since $\{\alpha_1,\alpha_2\}\in E(\Gamma)$, the rows $\alpha_1$ and $\alpha_2$ are distinct. Hence, there is an integer $1\leq j\leq r$ such that $A_{\alpha_1 j}\neq A_{\alpha_2 j}$. Therefore, we have $(f_{\beta_1,\beta_2,j}(\alpha_1),f_{\beta_1,\beta_2,j}(\alpha_2))=(\beta_1,\beta_2)$ or $(f_{\beta_2,\beta_1,j}(\alpha_1),f_{\beta_2,\beta_1,j}(\alpha_2))=(\beta_1,\beta_2)$.
\end{proof}

We have the following special case when $\Gamma$ is a complete graph.

\begin{cor}\label{2-tran-K}
Suppose that the graph $\Gamma$ is complete. The monoid $M_{\Gamma,A}$ is $2$-transitive if and only if the matrix $A$ has distinct rows.
\end{cor}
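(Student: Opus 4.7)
The plan is to derive this directly from Proposition~\ref{SEgT} together with the observation (recorded in the paragraph preceding that proposition) that when $\Gamma$ is complete, strong edge transitivity of a submonoid of $\End(\Gamma)$ is literally the same condition as $2$-transitivity. So the bulk of the work has already been done, and what remains is a short two-way translation.

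First I would unpack the equivalence of $2$-transitivity and strong edge transitivity in the complete case. Strong edge transitivity of $M_{\Gamma,A}$ requires that for all edges $\{\alpha_1,\alpha_2\}, \{\beta_1,\beta_2\} \in E(\Gamma)$, some $m \in M_{\Gamma,A}$ sends $\alpha_i$ to $\beta_i$. When $\Gamma$ is complete, an edge is just an unordered pair of distinct vertices, so this is the assertion that for any two ordered pairs of distinct elements of $\Omega$, some element of $M_{\Gamma,A}$ carries one to the other, i.e.\ $2$-transitivity. This justifies replacing \emph{strongly edge transitive} by \emph{$2$-transitive} throughout.

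Next I would invoke Proposition~\ref{SEgT}, which tells us that $M_{\Gamma,A}$ is strongly edge transitive if and only if for every $\{\alpha,\beta\} \in E(\Gamma)$ the rows $\alpha$ and $\beta$ of $A$ are distinct. Since $\Gamma$ is complete, the set of edges is exactly the set of unordered pairs of distinct vertices of $\Omega$. Therefore the condition becomes: for every pair of distinct $\alpha,\beta \in \Omega$, the rows $\alpha$ and $\beta$ of $A$ are distinct, which is precisely the statement that $A$ has pairwise distinct rows.

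Combining the two equivalences yields the claim. There is no real obstacle here; the statement is a specialization of Proposition~\ref{SEgT} to the complete graph, and the only content is the translation between edge transitivity and $2$-transitivity, which the paper has already flagged. I would write the proof as a single short paragraph consisting of these two observations chained together.
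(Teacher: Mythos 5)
Your proof is correct and follows exactly the route the paper intends: the corollary is stated there without proof as an immediate specialization of Proposition~\ref{SEgT}, using the observation (made just before that proposition) that for a complete graph strong edge transitivity coincides with $2$-transitivity and every pair of distinct vertices is an edge. Nothing is missing.
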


\begin{thm}\label{2-tran-Main}
Suppose that if $\{\alpha,\beta\}\in E(\Gamma)$ then the rows $\alpha$ and $\beta$ are not equal in the matrix $A$, for every $\{\alpha,\beta\}\in E(\Gamma)$. The monoid $M_{\Gamma,A}$ satisfies conditions (1),(2),(3) and (5) of Theorem~\ref{aug-simple-monoids} with $\Gamma(M_{\Gamma,A})=\Gamma$ and the augmentation $M_{\Gamma,A}$-submodule $\Aug(\mathbb{F}\Omega)$ is simple if and only if $m_A=\abs{\Omega}$, for every field $\mathbb{F}$.
\end{thm}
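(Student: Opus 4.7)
The plan is to combine Proposition~\ref{GraGa} with a direct computation of the incidence matrix appearing in condition (4) of Theorem~\ref{aug-simple-monoids}.

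First I would verify that $M_{\Gamma,A}$ satisfies the four hypotheses of Proposition~\ref{GraGa} with respect to $\Gamma$. Transitivity on $\Omega$ follows because $M_{\Gamma,A}$ contains every constant map. Edge transitivity is a consequence of Proposition~\ref{SEgT}, which under the standing row hypothesis even yields strong edge transitivity. The edge-collapsing hypothesis is witnessed by any constant map, and the rank-$2$ hypothesis by any $f_{\alpha,\beta,j}$ for $\{\alpha,\beta\}\in E(\Gamma)$. Proposition~\ref{GraGa} then delivers conditions (1), (2), (3), (5) of Theorem~\ref{aug-simple-monoids} and the equality $\Gamma(M_{\Gamma,A})=\Gamma$ in a single stroke, identifying $J$ with the set of rank-$2$ elements.

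It remains to recognise condition (4) as the rank statement $m_A=\abs{\Omega}$. Every $f\in E(J)$ has rank $2$ and kernel $\{A_j,\overline{A}_j\}$ for some $j$, so the preimages $f^{-1}f\omega$ lie in $\{A_j,\overline{A}_j\mid 1\le j\le r\}$. For the reverse inclusion, fix $j$; both $A_j$ and $\overline{A}_j$ are nonempty since $A$ has no zero or all-ones columns, and the connectedness of $\Gamma$ forces an edge $\{\alpha,\beta\}\in E(\Gamma)$ crossing the bipartition, with say $\alpha\in A_j$ and $\beta\in\overline{A}_j$. Then $f_{\alpha,\beta,j}$ is an idempotent in $J$ whose kernel classes are exactly $A_j$ and $\overline{A}_j$. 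Hence the set system in condition (4) is precisely $\{A_j,\overline{A}_j\mid 1\le j\le r\}$.

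Finally, the incidence matrix has columns $\{1_{A_j}\}_{j=1}^r\cup\{1_{\overline{A}_j}\}_{j=1}^r$, and since $1_{A_j}+1_{\overline{A}_j}=1_{\Omega}$ their $\mathbb{F}$-linear span agrees with the span of $\{1_{A_j}\}_{j=1}^r\cup\{1_{\Omega}\}$, which is the column space of $\widetilde{A}$ over $\mathbb{F}$. Therefore the rank of the incidence matrix equals $m_A$, and condition (4) reduces to $m_A=\abs{\Omega}$. Applying Theorem~\ref{aug-simple-monoids} completes the biconditional. The only delicate point in the argument is the realisation step: without the connectedness of $\Gamma$, some index $j$ might fail to be represented by any idempotent in $J$, and the description of the set system would break down.
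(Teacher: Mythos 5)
Your proposal is correct and follows essentially the same route as the paper: Proposition~\ref{SEgT} plus Proposition~\ref{GraGa} give conditions (1), (2), (3), (5) and $\Gamma(M_{\Gamma,A})=\Gamma$, and the incidence matrix is identified as $[A\mid\overline{A}]$, whose rank equals $m_A$ because $1_{A_j}+1_{\overline{A}_j}=1_\Omega$. Your explicit justification that every pair $\{A_j,\overline{A}_j\}$ is realised as the kernel of an idempotent of $J$ (using connectedness of $\Gamma$ and the fact that no column of $A$ is zero or all ones) is a detail the paper passes over with ``by construction,'' and it is a correct and worthwhile addition.
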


\begin{proof}
By Proposition~\ref{SEgT}, the monoid $M_{\Gamma,A}$ is strongly edge transitive. Now, since $C\subset M_{\Gamma,A}$ and the rank of elements $F_{\Gamma,A}$ are $2$, by Proposition~\ref{GraGa}, $F_{\Gamma,A}$ is the minimal $\J$-class of $M_{\Gamma,A}\setminus C$ which is regular and satisfies conditions (3) and (5) of Theorem~\ref{aug-simple-monoids}. Moreover $\Gamma(M_{\Gamma,A})=\Gamma$. Hence, the monoid $M_{\Gamma,A}$ satisfies conditions (1),(2),(3) and (5) of Theorem~\ref{aug-simple-monoids}.

By construction, since $F_{\Gamma,A}$ is the minimal $\J$-class of $M_{\Gamma,A}\setminus C$, the incidence matrix is $[A\mid\overline{A}]$ where $\overline{A}=[\overline{A}_1\cdots \overline{A}_r]$. But $A_j+\overline{A}_j=C_{r+1}$, for all $1\leq j\leq r$, shows $\rank([A\mid\overline{A}])=\rank(\widetilde{A})=m_A$. Now, by Theorem~\ref{aug-simple-monoids}, the augmentation submodule $\Aug(\mathbb{F}\Omega)$ is simple if and only if $m_A=\abs{\Omega}$.
\end{proof}

Similarly, by Corollary~\ref{2-tran-K}, we have the following corollary.

\begin{cor}\label{2-tran-K2}
Suppose that the graph $\Gamma$ is complete. If the matrix $A$ has distinct rows then $M_{\Gamma,A}$ is $2$-transitive and satisfies conditions (1),(2),(3) and (5) of Theorem~\ref{aug-simple-monoids} and the augmentation $M_{\Gamma,A}$-submodule $\Aug(\mathbb{F}\Omega)$ is simple if and only if $m_A=\abs{\Omega}$, for every field $\mathbb{F}$.
\end{cor}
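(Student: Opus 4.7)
The plan is to obtain this as a direct consequence of Corollary~\ref{2-tran-K} and Theorem~\ref{2-tran-Main}, using the fact that completeness of $\Gamma$ upgrades strong edge transitivity to $2$-transitivity. No new combinatorial or representation-theoretic input is needed beyond what has already been assembled.

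First I would invoke Corollary~\ref{2-tran-K}: since $\Gamma$ is complete and the rows of $A$ are distinct by hypothesis, that corollary yields at once that $M_{\Gamma,A}$ is $2$-transitive on $\Omega$. Next I would verify that the hypothesis of Theorem~\ref{2-tran-Main} is automatic in the present setting. That hypothesis asks that every edge $\{\alpha,\beta\}\in E(\Gamma)$ have rows $\alpha,\beta$ of $A$ distinct. But in a complete graph every pair of distinct vertices is an edge, and we have assumed that \emph{all} rows of $A$ are pairwise distinct, so the required condition on edges is trivially satisfied.

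With the hypothesis of Theorem~\ref{2-tran-Main} in hand, I would simply read off its conclusion: $M_{\Gamma,A}$ satisfies conditions (1), (2), (3) and (5) of Theorem~\ref{aug-simple-monoids} with $\Gamma(M_{\Gamma,A})=\Gamma$, and the augmentation submodule $\Aug(\mathbb{F}\Omega)$ is simple if and only if $m_A=\abs{\Omega}$ over $\mathbb{F}$. Combined with the $2$-transitivity obtained in the first step, this is exactly the statement of the corollary.

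There is essentially no obstacle here; the only thing one has to be careful about is the bookkeeping in passing from the hypothesis "the rows of $A$ are distinct" (which is global) to the hypothesis of Theorem~\ref{2-tran-Main} (which is formulated per edge), and this reduction is immediate for complete $\Gamma$. One could alternatively present the corollary as the specialization of Theorem~\ref{2-tran-Main} obtained by strengthening "strongly edge transitive" to "$2$-transitive" via Corollary~\ref{2-tran-K}; I would keep the exposition in that order to emphasize that the rank condition $m_A=\abs{\Omega}$ is the sole nontrivial obstruction to simplicity of $\Aug(\mathbb{F}\Omega)$ even among $2$-transitive monoids, which is one of the main points of the paper.
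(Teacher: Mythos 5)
Your argument is correct and is exactly the route the paper takes: the corollary is stated as an immediate consequence of Corollary~\ref{2-tran-K} (giving $2$-transitivity) together with Theorem~\ref{2-tran-Main}, whose per-edge hypothesis is automatic when $\Gamma$ is complete and all rows of $A$ are distinct. Nothing is missing.
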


In the following examples $M_{\Gamma,A}$ satisfies conditions (1),(2),(3) and (5) and does not satisfy condition (4) of Theorem~\ref{aug-simple-monoids}.

\begin{thm}
Let $4\leq n$, $\Omega=\{\alpha_1,\ldots,\alpha_n\}$, $r=n-2$, $\Gamma$ be a graph on $\Omega$ and $A$ be the $n\times r$ matrix as follows:\\
\[
\begin{bmatrix}
1 & 1 & 1 & \ldots & 1\\
1 & 0 & 0 & \ldots & 0\\
0 & 1 & 0 & \ldots & 0\\
\cdots\\
0 & 0 & 0 & \ldots & 1\\
0 & 0 & 0 & \ldots & 0
\end{bmatrix}
\]
The monoid $M_{\Gamma,A}$ satisfies conditions (1), (2), (3) and (5) of Theorem~\ref{aug-simple-monoids} with $\Gamma(M)=\Gamma$, but
$M_{\Gamma,A}$ does not satisfy condition (4) of Theorem~\ref{aug-simple-monoids}. In particular, if $\Gamma$ is complete, then $M_{\Gamma,A}$ is $2$-transitive but $\Aug(\mathbb{C}\Omega)$ is not simple.
\end{thm}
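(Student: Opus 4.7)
The plan is to invoke Theorem~\ref{2-tran-Main} (and Corollary~\ref{2-tran-K} for the $2$-transitivity claim) after verifying two simple facts about the matrix $A$, and then defeat condition (4) by a trivial dimension count.

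First I would observe that all $n$ rows of $A$ are pairwise distinct: row $1$ is the all-ones vector, row $n$ is the zero vector, and rows $2,\ldots,n-1$ are the $n-2$ standard basis vectors $e_1,\ldots,e_{n-2}$ (as rows of length $r=n-2$). In particular, for every edge $\{\alpha,\beta\}\in E(\Gamma)$ the rows indexed by $\alpha$ and $\beta$ are distinct. This is exactly the hypothesis of Theorem~\ref{2-tran-Main}, so that theorem applies directly and delivers conditions (1), (2), (3) and (5) of Theorem~\ref{aug-simple-monoids} together with $\Gamma(M_{\Gamma,A})=\Gamma$. If in addition $\Gamma$ is complete, then distinctness of all the rows of $A$ lets Corollary~\ref{2-tran-K} conclude that $M_{\Gamma,A}$ is $2$-transitive.

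Next I would show that condition (4) fails. By the last paragraph of the proof of Theorem~\ref{2-tran-Main}, the relevant rank is $m_A=\rank \widetilde{A}$, where $\widetilde{A}=[A\mid C_{r+1}]$ is an $n\times(r+1)=n\times(n-1)$ matrix. Since $\widetilde{A}$ has only $n-1$ columns, we get $m_A\le n-1<n=\abs{\Omega}$, so condition (4) of Theorem~\ref{aug-simple-monoids} fails. (If desired one also sees the exact value $m_A=n-1$: rows $2,\ldots,n-1$ together with row $n$ of $\widetilde{A}$ are easily seen to be linearly independent, while row $1$ equals the sum of rows $2,\ldots,n-1$ minus $(n-3)$ times row $n$.) Applying the ``only if'' direction of Theorem~\ref{aug-simple-monoids} to the field $\mathbb{F}=\mathbb{C}$ then gives that $\Aug(\mathbb{C}\Omega)$ is not simple, completing the proof.

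There is no real obstacle here: the construction is engineered so that $\widetilde{A}$ has strictly fewer columns than $|\Omega|$, which is precisely what kills the rank condition, while the pairwise distinctness of the rows makes Theorem~\ref{2-tran-Main} and Corollary~\ref{2-tran-K} immediately applicable. The only point requiring even minor attention is writing down the rows of $A$ carefully enough to verify both distinctness and the column count $r+1=n-1$.
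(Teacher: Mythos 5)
Your proposal is correct and follows essentially the same route as the paper: verify that the rows of $A$ are pairwise distinct (which is where $4\leq n$ is used), apply Theorem~\ref{2-tran-Main} and Corollary~\ref{2-tran-K}, and note that $\widetilde{A}$ has only $r+1=n-1$ columns so $m_A\leq n-1<\abs{\Omega}$, defeating condition (4). The explicit computation that $m_A$ equals exactly $n-1$ is a small bonus beyond what the paper records, but the argument is otherwise identical.
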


\begin{proof}
Since $4\leq n$, all distinct rows $\alpha$ and $\beta$ are not equal in the matrix $A$.
Hence, by Theorem~\ref{2-tran-Main}, $M_{\Gamma,A}$ satisfies conditions (1), (2), (3) and (5) of Theorem~\ref{aug-simple-monoids} for every field $\mathbb{F}$.
The rank of matrix $\widetilde{A}$ is at most $r+1=n-1$. Therefore, by Corollary~\ref{2-tran-K2}, $M_{\Gamma,A}$ does not satisfy condition (4) of Theorem~\ref{aug-simple-monoids}.

Also, if the graph $\Gamma$ is complete, then by Corollary~\ref{2-tran-K}, the monoid $M_{\Gamma,A}$ is $2$-transitive. But, again as the rank of matrix $\widetilde{A}$ is less than $n$, $\Aug(\mathbb{C}\Omega)$ is not simple.
\end{proof}

We now give a natural family of transformation monoids satisfying (1), (2), (3) and (5) of Theorem~\ref{aug-simple-monoids} but nor (4).

Recall that a function $f\colon \mathcal{P}(A)\rightarrow \mathcal{P}(B)$ is a lattice homomorphism, if $f$ satisfies the following conditions:
\begin{enumerate}
\item $f(X\cup Y) = f(X) \cup f(Y)$,
\item $f(X\cap Y) = f(X) \cap f(Y)$,
\end{enumerate}
for all $X, Y\subseteq A$.

\begin{thm}\label{2n-n}
Let $X$ be a set with $2\leq\abs{X}$ and let
$$M_X=\{f\colon \mathcal{P}(X)\rightarrow\mathcal{P}(X)\mid f\mbox{ is a lattice homomorphism}\}.$$
The monoid $M_X$ satisfies conditions (1),(2),(3) and (5) and $M_X$ does not satisfy condition (4) of Theorem~\ref{aug-simple-monoids}, for every field $\mathbb{F}$.
\end{thm}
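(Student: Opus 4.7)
The plan is to verify the four hypotheses of Proposition~\ref{GraGa} for $M_X$ acting on $\Omega=\mathcal{P}(X)$, taking $\Gamma$ to be the comparability graph of $(\mathcal{P}(X),\subseteq)$ whose edges are the pairs $\{A,B\}$ with $A\subsetneq B$. This will deliver conditions (1), (2), (3), and (5) of Theorem~\ref{aug-simple-monoids} together with $\Gamma(M_X)=\Gamma$ in one stroke. Lattice homomorphisms are order-preserving (as $S\subseteq T$ is equivalent to $S\cap T=S$), so each element of $M_X$ is a simplicial self-map of $\Gamma$; hence $M_X\le\End(\Gamma)$. All constant maps belong to $M_X$, which gives transitivity of $M_X$ on $\Omega$ as well as an element collapsing any prescribed edge. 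A rank-$2$ element is exhibited by fixing $y\in X$ and setting $f_y(S)=\{y\}$ when $y\in S$ and $f_y(S)=\emptyset$ otherwise, which is readily checked to be a lattice homomorphism. Edge-transitivity is then established by an explicit construction: given $A_1\subsetneq A_2$ and $B_1\subsetneq B_2$, pick $y\in A_2\setminus A_1$ and let $f(S)=B_2$ if $y\in S$ and $f(S)=B_1$ otherwise; one checks that $f\in M_X$ and $\{f(A_1),f(A_2)\}=\{B_1,B_2\}$. The graph $\Gamma$ is connected, since every element of $\mathcal{P}(X)$ is comparable to $\emptyset$.

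The heart of the argument is the classification of the kernel partitions of rank-$2$ elements of $M_X$. If $f\in M_X$ has image $\{A,B\}$ with $A\subsetneq B$, then $f^{-1}(A)$ is closed under both $\cap$ and $\cup$ and is downward closed (since $f$ is order-preserving and $B\not\subseteq A$), so it is a principal ideal $\{S\subseteq X:S\subseteq T_0\}$ for some $T_0\subseteq X$; symmetrically $f^{-1}(B)=\{S\subseteq X:S\supseteq T_1\}$ for some $T_1\subseteq X$. Demanding that these two pieces partition $\mathcal{P}(X)$ and testing on singletons $\{x\}$ forces $T_1=\{y\}$ and $T_0=X\setminus\{y\}$ for a single $y\in X$: indeed, each $x\in X$ must satisfy $x\in T_0$ or $T_1\subseteq\{x\}$, and the cases $T_1=\emptyset$ and $\abs{T_1}\ge 2$ are both ruled out by the rank-$2$ assumption. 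Thus every rank-$2$ element of $M_X$ has kernel partition $\{U_y,V_y\}$, with $U_y=\{S\subseteq X:y\in S\}$ and $V_y=\mathcal{P}(X)\setminus U_y$, and every such partition is realized by an idempotent in $J$ (take for instance $A=\emptyset$, $B=\{y\}$).

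Consequently, the set system appearing in condition (4) of Theorem~\ref{aug-simple-monoids} equals $\{U_y,V_y:y\in X\}$, of cardinality $2\abs{X}$. Over any field $\mathbb{F}$, the identity $\mathbf{1}_{U_y}+\mathbf{1}_{V_y}=\mathbf{1}_\Omega$ shows that the column space of the incidence matrix is contained in the subspace of $\mathbb{F}^\Omega$ spanned by $\{\mathbf{1}_{U_y}:y\in X\}\cup\{\mathbf{1}_\Omega\}$, and hence has dimension at most $\abs{X}+1$. Since $\abs{X}+1<2^{\abs{X}}=\abs{\Omega}$ whenever $\abs{X}\ge 2$, the rank of the incidence matrix cannot equal $\abs{\Omega}$, and condition (4) fails over every field $\mathbb{F}$.

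The main obstacle is the kernel classification above: one must argue carefully that the only partitions of $\mathcal{P}(X)$ into a principal order ideal and a principal filter are the singleton-based ones $(X\setminus\{y\},\{y\})$. Once this is established, the application of Proposition~\ref{GraGa} is routine and the failure of (4) reduces to a cheap dimension count on complementary indicator vectors.
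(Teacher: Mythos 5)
Your proposal is correct and follows essentially the same route as the paper: verify the hypotheses of Proposition~\ref{GraGa} for the comparability graph of $(\mathcal{P}(X),\subseteq)$ using the same explicit two-valued lattice homomorphisms, then classify the kernels of rank-$2$ elements as $\{U_y,V_y\}$ and conclude by a dimension count. Your kernel classification (principal ideal/filter partitioning $\mathcal{P}(X)$) is phrased differently from the paper's (which locates the unique atom $a$ with $f(\{a\})=f(X)$ via $f(\{a\})\cap f(\{b\})=f(\emptyset)$) but amounts to the same thing, and your bound of $\abs{X}+1$ on the incidence rank is in fact the more careful statement.
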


\begin{proof}
Let $\Gamma$ be a graph with $V(\Gamma)=\mathcal{P}(X)$ and there is an edge between $X_1$ and $X_2$ if and only if $X_1\subsetneq X_2 $ or $X_2\subsetneq X_1$. Suppose that $f\in M_X$. Let $X_1,X_2\in\mathcal{P}(X)$ with $\{X_1,X_2\}\in E(\Gamma)$. Hence, we have $X_1\subsetneq X_2 $ or $X_2\subsetneq X_1$. Since $f$ is a lattice homomorphism, we have $f(X_1\cup X_2)=f(X_1)\cup f(X_2)$. Then, we have $f(X_1)\cup f(X_2)=f(X_1)$ or $f(X_1)\cup f(X_2)=f(X_2)$. It follows that $f(X_1)=f(X_2)$ or $\{X_1,X_2\}\in E(\Gamma)$ and, thus, $f\in\End(\Gamma)$. Therefore, $M_X\leq \End(\Gamma)$.

Since $M_X$ consists of all constant maps on $\mathcal{P}(X)$, $M_X$ satisfies conditions (1) and (3) of Proposition~\ref{GraGa}.

Suppose that $X_1\subsetneq X_2$ and $Y_1\subsetneq Y_2$, for some subsets $X_1,X_2,Y_1,Y_2\in \mathcal{P}(X)$. Since $X_1\subsetneq X_2$, there is an element $x\in X$ such that $x\in X_2\setminus X_1$.
Let $f\colon \mathcal{P}(X)\rightarrow\mathcal{P}(X)$ be a map as follows:
\begin{equation*}
f(Y)= \begin{cases}
Y_2& \text{if}\ x\in Y\\
Y_1& \text{otherwise},
\end{cases}
\end{equation*}
for every $Y\subseteq X$. It is easy to check that $f$ is a lattice homomorphism and, thus, $f\in M_X$.
Now, as $\{f(X_1),f(X_2)\}=\{Y_1,Y_2\}$, $M_X$ satisfies condition (2) of Proposition~\ref{GraGa}. Also, the existence of the map $f$, for some subsets $X_1,X_2,Y_1,Y_2\in \mathcal{P}(X)$, yields $M_X$ satisfies condition (4) of Proposition~\ref{GraGa}.

Now, by Proposition~\ref{GraGa}, $I(M_X)$ is the set of all constant maps of $M_X$ and $M_X\setminus I(M_X)$ has a unique minimal $\J$-class $J$ which is regular, consists of all elements of $M_X$ with rank two and satisfies conditions (3) and (5) of Theorem~\ref{aug-simple-monoids} and moreover $\Gamma(M_X)=\Gamma$.
Suppose that $f\in M_X$ and the rank of $f$ is equal to $2$.
Since $f$ is a lattice homomorphism and $\rk(f)=2$, we have $\im f=\{f(\emptyset),f(X)\}$.
Again, as $f$ is a lattice homomorphism, there is an element $a\in X$ such that $f(\{a\})=f(X)$. Moreover, $a$ is unique, since $f(\{a\})\cap f(\{b\})=f(\{a\}\cap\{b\})=f(\{\emptyset\})$.
Hence, we have $f(Y)=f(X)$, for every subset $Y\subseteq X$ with $a\in Y$ and $f(Z)=f(\emptyset)$, for every subset $Z\subseteq X$ with $a\not\in Z$. Therefore, we have $\ker f=\{A,\mathcal{P}(X)\setminus A\}$ where $A=\{X\in\mathcal{P}(X)\mid a\in X\}$. It follows that the rank of the incidence  matrix of $$\{B\mid B= f^{-1}f\omega, \mbox{ for some }\omega\in\Omega\mbox{ and }f\in E(J)\}$$ is at most $\abs{X}$ over $\mathbb{F}$. Now, as $\abs{V(\Gamma(M_X))}=\abs{\mathcal{P}(X)}=2^n$, $M_X$ does not satisfy condition (4) of Theorem~\ref{aug-simple-monoids}.
\end{proof}

Let $\Omega=\{1,\ldots,n\}$ and $\Delta$ be an acyclic connected digraph with the vertex set $\Omega$. We define a partial order
on $\Omega$ by $\omega\leq\omega'$ if there is a directed path $\omega$ to $\omega'$ in the graph $\Delta$.

Let $B$ be an $n\times r$ matrix over $\{0,1\}$ with no zero columns or all ones columns and
each column is the characteristic vector of an upper set of $(\Omega,\leq)$.

Also, we define the set of mappings
\begin{align*}
\overrightarrow{F}_{\Delta,B}=\{f\colon \Omega\rightarrow\Omega\mid&~ f \mbox{ is a simplicial map, } f \mbox{ preserves orientation and }\\
&~\ker f=\{B_j,\overline{B}_j\},
\mbox{ for some integer } 1\leq j\leq r\}
\end{align*}
and the monoid
$\overrightarrow{M}_{\Delta,B}=\overrightarrow{F}_{\Delta,B}\cup C\cup \{1_{\Omega}\}\in \End(\Delta)$
where $B_j=\{\alpha\in \Omega\mid B_{\alpha j}=1\}$, $\overline{B}_j=\Omega\setminus B_j$ and $C$ is the set of all constant maps on $\Omega$.
Suppose that $f\in \overrightarrow{F}_{\Delta,B}$ and $\ker f=\{B_j,\overline{B}_j\}$, for some integer $1\leq j\leq r$.
Since the digraph $\Delta$ is connected, $f$ is a simplicial map, $f$ preserves orientation and $B_{j}$ is an upper set on $\Omega$, there is an edge form $f(\overline{B}_j)$ to $f(B_{j})$ and there is no edge form $f(B_{j})$ to $f(\overline{B}_j)$.

Let $f,g\in \overrightarrow{F}_{\Delta,B}$. If $fg\not\in C$, then $\im fg=\im f$ and $\ker fg=\ker g$. Since $\rk(f)=\rk(g)=2$, $fg$ is a simplicial map and $fg$ preserves orientation, we have $fg\in \overrightarrow{F}_{\Delta,B}$. It follows that $\overrightarrow{M}_{\Delta,B}$ is a transformation monoid.
Let $\widetilde{B}$ be an $n\times (r+1)$ matrix $[B\mid C_{r+1}]$ over $\{0,1\}$ where all entries of the column $C_{r+1}$ are one and let the rank of matrix $\widetilde{B}$ be $m_B$.

\begin{thm}\label{2-tran-Main-digraph}
Suppose that if $\{\alpha,\beta\}\in E(\Delta)$ then the rows $\alpha$ and $\beta$ are not equal in the matrix $B$, for every $\{\alpha,\beta\}\in E(\Delta)$. The monoid $\overrightarrow{M}_{\Delta,B}$ satisfies conditions (1),(2),(3) and (5) of Theorem~\ref{aug-simple-monoids} and the augmentation $\Aug(\mathbb{F}\Omega)$ is simple if and only if $m_B=\abs{\Omega}$, for every field $\mathbb{F}$.
\end{thm}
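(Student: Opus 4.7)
The proof will closely parallel that of Theorem~\ref{2-tran-Main}, with Proposition~\ref{GraGa} applied to $\overrightarrow{M}_{\Delta,B}$ viewed as a submonoid of $\End(\Gamma)$, where $\Gamma$ is the underlying undirected graph of $\Delta$. First I would check that all four hypotheses of Proposition~\ref{GraGa} hold. Transitivity on $\Omega$ and condition (3) of that proposition are immediate from the fact that $\overrightarrow{M}_{\Delta,B}$ contains all constant maps. The existence of a rank-two element (condition (4)) and edge-transitivity (condition (2)) are handled together by the following construction: given directed edges $v_1\to v_2$ and $w_1\to w_2$ in $\Delta$, the hypothesis on distinct rows yields an index $j$ with $B_{v_1 j}\neq B_{v_2 j}$, and because $B_j$ is an upper set of $(\Omega,\leq)$ and $v_1\leq v_2$, this forces $v_1\in\overline{B}_j$ and $v_2\in B_j$. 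The map $f$ defined by $f(x)=w_1$ for $x\in\overline{B}_j$ and $f(x)=w_2$ for $x\in B_j$ is then a rank-two orientation-preserving simplicial map of $\Delta$: any directed edge $x\to y$ either has both endpoints in a single cell (and is collapsed) or has $x\in\overline{B}_j$, $y\in B_j$ by the upper-set argument just given (and is sent to $w_1\to w_2$). Hence $f\in \overrightarrow{F}_{\Delta,B}$ realises strong edge-transitivity.

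Once Proposition~\ref{GraGa} applies, we conclude that $I(\overrightarrow{M}_{\Delta,B})=C$, that $\overrightarrow{F}_{\Delta,B}$ is precisely the unique minimal $\J$-class $J$ of $\overrightarrow{M}_{\Delta,B}\setminus I(\overrightarrow{M}_{\Delta,B})$, that $J$ is regular and consists of all rank-two elements, and that conditions (3) and (5) of Theorem~\ref{aug-simple-monoids} hold with $\Gamma(\overrightarrow{M}_{\Delta,B})=\Gamma$. Together with the presence of constant maps this gives conditions (1), (2), (3) and (5) of Theorem~\ref{aug-simple-monoids}.

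It remains to identify the set system in condition (4). By construction every $f\in E(J)$ has $\ker f=\{B_j,\overline{B}_j\}$ for some $j$, so each set $f^{-1}f(\omega)$ equals either $B_j$ or $\overline{B}_j$. Conversely, for every $1\leq j\leq r$ I would produce an idempotent in $E(J)$ whose kernel is $\{B_j,\overline{B}_j\}$: since $\Delta$ is connected and $B_j$ is a proper nonempty upper set, there is some edge of $\Gamma$ with one endpoint in $B_j$ and the other in $\overline{B}_j$, and the upper-set property together with acyclicity forces this edge to be directed from a vertex $\alpha\in\overline{B}_j$ to a vertex $\beta\in B_j$. The map sending $\overline{B}_j$ to $\alpha$ and $B_j$ to $\beta$ is then easily checked to be an idempotent of $\overrightarrow{F}_{\Delta,B}$ with the required kernel. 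Therefore the set system is $\{B_j : 1\leq j\leq r\}\cup\{\overline{B}_j : 1\leq j\leq r\}$, and its incidence matrix is $[B\mid \overline{B}]$. Since $B_j+\overline{B}_j=C_{r+1}$ over $\mathbb F$, the column space of $[B\mid \overline{B}]$ equals that of $\widetilde{B}=[B\mid C_{r+1}]$, so the incidence matrix has rank $m_B$. Applying Theorem~\ref{aug-simple-monoids} yields that $\Aug(\mathbb F\Omega)$ is simple if and only if $m_B=\abs{\Omega}$.

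The main obstacle I anticipate is the edge-transitivity verification, where one must simultaneously exploit the distinct-rows hypothesis and the upper-set structure of the columns of $B$ to ensure that the rank-two map actually preserves orientation and therefore lies in $\overrightarrow{F}_{\Delta,B}$; a secondary but not negligible point is confirming that for every $j$ one obtains an idempotent with kernel $\{B_j,\overline{B}_j\}$, for which connectedness and acyclicity of $\Delta$ must both be invoked.
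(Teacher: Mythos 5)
Your proposal is correct and follows essentially the same route as the paper: verify edge-transitivity of $\overrightarrow{M}_{\Delta,B}$ via the distinct-rows hypothesis combined with the upper-set structure of the columns, invoke Proposition~\ref{GraGa} to get conditions (1), (2), (3), (5), and identify the incidence matrix of the set system as $[B\mid\overline{B}]$, whose rank equals $m_B$ because $B_j+\overline{B}_j=C_{r+1}$. Your explicit check that every $\{B_j,\overline{B}_j\}$ is realised as the kernel of an idempotent in $E(J)$ is a detail the paper leaves implicit, but it does not change the argument.
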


\begin{proof}
We prove that $\overrightarrow{M}_{\Delta,B}$ is edge transitive.
Suppose that there are directed edges from $v$ to $v'$ and $w$ to $w'$ in the graph $\Delta$. Since $\{v,v'\}\in E(\Delta)$, the rows $v$ and $v'$ are not equal and, thus, there exists an integer $1\leq j\leq r$ such that $B_{vj_1}\neq B_{v'j_1}$.
Now, as $B_{j}$ is an upper set and there is an edge from $v$ to $v'$, we have $v\in \overline{B}_{j}$ and $v'\in B_j$. Let $f$ be a map with
$f(B_j)=w'$ and $f(\overline{B}_{j})=w$. Again, as $B_{j}$ is an upper set and there is an edge from $w$ to $w'$, the map $f$ is a simplicial map and $f$ preserves orientation. Hence $f\in \overrightarrow{F}_{\Delta,B}$. Now, as $f(v)=w$ and $f(v')=w'$, $\overrightarrow{M}_{\Delta,B}$ is edge transitive.
%
The rest of the proof is entirely similar to the proof of Theorem~\ref{2-tran-Main}.
\end{proof}

\begin{thm}
Suppose that $\abs{\Omega}=2$ or $\abs{\Omega}=3$. If the transformation monoid $(M,\Omega)$ is $2$-transitive then the augmentation submodule $\Aug(\mathbb{C}\Omega)$ is simple.
\end{thm}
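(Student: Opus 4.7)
The plan is to reduce to Theorem~\ref{aug-simple-monoids} via Proposition~\ref{2-tran} and verify the one outstanding hypothesis by a short case analysis. If $\abs{\Omega}=2$, then $\Aug(\mathbb{C}\Omega)$ is one-dimensional and hence simple, so I assume $\abs{\Omega}=3$. If $M$ is a group, then $M$ is a $2$-transitive subgroup of $S_3$, forcing $M=S_3$, and simplicity of $\Aug(\mathbb{C}\Omega)$ is Burnside's classical theorem. Thus I may assume $M$ is not a group; Proposition~\ref{2-tran} then supplies conditions (1), (2), and (5) of Theorem~\ref{aug-simple-monoids}, along with condition (3) over $\mathbb{C}$, and I only need to verify condition (4), i.e.\ that the incidence matrix of
\[
\mathcal{E}=\{f^{-1}f\omega\mid f\in E(J),\ \omega\in\Omega\}
\]
has rank $3$ over $\mathbb{C}$.

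Since $I(M)$ consists of constants and $J$ is the unique minimal $\J$-class of $M\setminus I(M)$, every element of $J$ has rank $2$ or $3$. If $\rk(J)=3$, then $J$ is the group of units, $E(J)=\{1\}$, and $\mathcal{E}$ is exactly the collection of singletons of $\Omega$, so the incidence matrix is the identity and has rank $3$. Assume $\rk(J)=2$. For each $2$-subset $\{a,b\}$ of $\Omega$, the completeness of $\Gamma(M)$ from Proposition~\ref{2-tran} produces an idempotent $e_{\{a,b\}}\in E(J)$ with image $\{a,b\}$, whose kernel partition has the shape $\bigl\{\{p\},\{q,c\}\bigr\}$ with $\{p,q\}=\{a,b\}$ and $c$ the third element of $\Omega$. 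Indexing these three idempotents by the excluded vertex $i\in\{1,2,3\}$, I write $q_i\in\Omega\setminus\{i\}$ for the singleton kernel block of $e_i$.

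If $q_1,q_2,q_3$ are pairwise distinct, they form a derangement of $\Omega$, so $\{q_1\},\{q_2\},\{q_3\}$ exhaust the three singletons and the incidence matrix has rank $3$ by the last remark of Section~2.5. Otherwise exactly two of the $q_i$ coincide (all three cannot, since $q_i\ne i$), and by relabelling I may reduce to the subcases $q_1=q_2=3$ with $q_3\in\{1,2\}$; in either subcase the four kernel blocks already give a $3\times 4$ incidence submatrix whose rank is easily checked to be $3$ by elementary row reduction. The main obstacle, such as it is, lies in this last step: unlike the simplicial-complex examples of Section~4, not every singleton need appear in $\mathcal{E}$, so one must combine the $2$-element kernel blocks with the available singletons to span $\mathbb{C}^{\Omega}$. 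Once (4) is verified, Theorem~\ref{aug-simple-monoids} concludes the proof.
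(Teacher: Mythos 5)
Your proof is correct and follows essentially the same route as the paper: dispose of the $\abs{\Omega}=2$ and group cases, invoke Proposition~\ref{2-tran} to get conditions (1), (2), (3), (5), and then verify condition (4) by exhibiting enough kernel classes of rank-$2$ idempotents to span $\mathbb{C}^{\Omega}$. The only (immaterial) difference is that the paper produces a second idempotent with a different singleton kernel block directly from $2$-transitivity, whereas you take one idempotent per $2$-subset via completeness of $\Gamma(M)$ and case-split on the singleton blocks $q_1,q_2,q_3$.
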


\begin{proof}
If $\abs{\Omega}=2$, then $\dim(\Aug(\mathbb{C}\Omega))=1$, so there is nothing to prove.
Hence, we have $\abs{\Omega}=3$. If $M$ is a group, there is nothing to prove. If $M$ is not a group, since $M$ is $2$-transitive, by Proposition~\ref{2-tran}, $M$ satisfies all conditions (1), (2), (3) and (5) of Theorem~\ref{aug-simple-monoids}. Hence,
$I(M)$ consists of all constant maps and $M\setminus I(M)$ has a unique minimal $\J$-class $J$ which is regular.
If $J$ is the group of units $G$, then by (3) $\Aug(\mathbb{C}\Omega)$ is a simple $\mathbb{C}G$-module and, hence, a simple $\mathbb{C}M$-module. So assume that $J$ consists of rank $2$ elements.
Suppose that $e\in E(J)$ and $\ker e=\{\{\alpha,\beta\},\{\gamma\}\}$ where $\Omega=\{\alpha,\beta,\gamma\}$.
Hence, we have $1_{\{\gamma\}}=1_{e^{-1}e\gamma}$.
Since $M$ is $2$-transitive, there exists an element $m\in M$ such that $me\alpha=\alpha$ and $me\gamma=\beta$.
The rank of element $me$ is equal to $2$. Thus, we have $me\in J$.
Now, as $J$ is regular, there is an idempotent $f\in E(J)$ such that $me\R f$ and so $f(\Omega)=\{\alpha,\beta\}$. Thus $\ker f=\{\{\alpha,\gamma\},\{\beta\}\}$ or $\ker f=\{\{\alpha\},\{\beta,\gamma\}\}$.
We suppose that $\ker f=\{\{\alpha,\gamma\},\{\beta\}\}$ as the other case is similar. Hence, we have $1_{\{\beta\}}=1_{f^{-1}f\beta}$ and
$1_{\{\alpha\}}=1_{f^{-1}f\gamma}-1_{e^{-1}e\gamma}$.
Therefore,
$$\mathcal{W}=\langle 1_B\mid B= f^{-1}f\omega, \mbox{ for some }\omega\in\Omega\mbox{ and }f\in E(J)  \rangle_{\mathbb{C}}=\mathbb{C}^{\Omega}.$$
Thus, $M$ satisfies condition (4) of Theorem~\ref{aug-simple-monoids} and, so, the augmentation submodule $\Aug(\mathbb{C}\Omega)$ is simple.
\end{proof}

The examples of this section might create the impression that all examples of transitive transformations monoids with simple augmentation modules, aside from groups, contain rank $2$ mappings.  But this is not the case. Let $\mathbb F$ be a finite field and let $M$ be the monoid of all affine mappings $x\mapsto ax+b$ with $a,b\in \mathbb F$.  Then $M$ consists of the constant mappings and the group of invertible affine mappings.  The latter group is $2$-transitive and hence the augmentation submodule is simple over $\mathbb C$.  But if $\mathbb F$ has more than two elements, then $M$ does not contain any rank $2$ mappings.


\section{$0$-transitive monoids and partial transformation modules}
Let $\Lambda$ be a non-empty set, $PT_{\Lambda}$ be the monoid of all partial transformations of $\Lambda$ and $\Omega=\Lambda\cup\{\omega_0\}$ where $\omega_0\not\in \Lambda$. Let $M$ be a submonoid of $PT_{\Lambda}$. We define the finite transformation monoid $(M,\Omega)$ as follows:
\begin{enumerate}
\item if $m\omega$ is  not defined then $m\omega=\omega_0$, for every $\omega\in\Lambda$ and element $m\in M$;
\item $m\omega_0=\omega_0$, for every element $m\in M$.
\end{enumerate}
In~\cite[Section 7.3]{Ben-Transformation-Monoids-2010}, the partial transformation module $\mathbb{F}\Lambda=\mathbb{F}\Omega/\langle\omega_0\rangle_{\mathbb{F}}$ is defined. Also, it is proved that $\mathbb{F}\Omega/\langle\omega_0\rangle_{\mathbb{F}}$ is isomorphic with $\Aug(\mathbb{F}\Omega)$. Hence, the $M$-module $\mathbb{F}\Lambda$ is isomorphic with $\Aug(\mathbb{F}\Omega)$. Hence, we have the following theorem.

\begin{thm}\label{partial-simple}
The module $\mathbb{F}\Lambda$ is simple if and only if $\Aug(\mathbb{F}\Omega)$ is simple, for a field $\mathbb{F}$.
\end{thm}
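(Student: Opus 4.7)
The plan is simply to invoke the $\mathbb{F}M$-module isomorphism $\mathbb{F}\Lambda\cong \Aug(\mathbb{F}\Omega)$ recorded just above the statement (and proved in~\cite[Section~7.3]{Ben-Transformation-Monoids-2010}). Once this is acknowledged, the equivalence is automatic because simplicity is invariant under isomorphism of modules.

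If one prefers a self-contained argument, I would make the isomorphism explicit. By construction $m\omega_0=\omega_0$ for every $m\in M$, so the one-dimensional subspace $\langle\omega_0\rangle_{\mathbb{F}}$ is an $\mathbb{F}M$-submodule of $\mathbb{F}\Omega$. Moreover, the augmentation map $\eta\colon \mathbb{F}\Omega\rightarrow \mathbb{F}$, with $\mathbb{F}$ regarded as the trivial $M$-module, is $\mathbb{F}M$-linear: for each basis element $\omega\in\Omega$ and each $m\in M$, $m\omega$ is a single element of $\Omega$, so $\eta(m\omega)=1=\eta(\omega)$. Since $\eta$ restricts to an isomorphism $\langle\omega_0\rangle_{\mathbb{F}}\rightarrow\mathbb{F}$, this yields an $\mathbb{F}M$-module decomposition
\[
\mathbb{F}\Omega=\langle\omega_0\rangle_{\mathbb{F}}\oplus \Aug(\mathbb{F}\Omega),
\]
and in particular $\mathbb{F}\Lambda=\mathbb{F}\Omega/\langle\omega_0\rangle_{\mathbb{F}}\cong \Aug(\mathbb{F}\Omega)$ as $\mathbb{F}M$-modules.

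With this identification, $\mathbb{F}\Lambda$ and $\Aug(\mathbb{F}\Omega)$ have isomorphic submodule lattices, so one is simple if and only if the other is, which is the theorem. There is essentially no obstacle in the argument; the only point worth emphasizing is that the complement of $\langle\omega_0\rangle_{\mathbb{F}}$ provided by $\Aug(\mathbb{F}\Omega)$ is $M$-invariant, which is precisely the statement that $\eta$ is an $\mathbb{F}M$-module homomorphism.
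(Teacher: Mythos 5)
Your proposal is correct and follows essentially the same route as the paper: the theorem is an immediate consequence of the $\mathbb{F}M$-module isomorphism $\mathbb{F}\Lambda=\mathbb{F}\Omega/\langle\omega_0\rangle_{\mathbb{F}}\cong\Aug(\mathbb{F}\Omega)$, which the paper simply cites from~\cite[Section~7.3]{Ben-Transformation-Monoids-2010}. Your self-contained verification of that isomorphism (the splitting $\mathbb{F}\Omega=\langle\omega_0\rangle_{\mathbb{F}}\oplus\Aug(\mathbb{F}\Omega)$ via the $\mathbb{F}M$-linearity of $\eta$, valid in any characteristic) is accurate and just fills in the detail the paper outsources to the reference.
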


Let $(M,\Omega)$ be an $0$-transitive monoid. By~\cite[Theorem 4.1]{Ben-Transformation-Monoids-2010}, $M$ has a zero map and $I(M)=\{0\}$ and by~\cite[Theorem 4.6]{Ben-Transformation-Monoids-2010}, $M$ has a
unique minimal non-zero $\J$-class $J$ which is regular and $J\cup\{0\}$ acts $0$-transitively (as a semigroup) on $\Omega$.

\begin{thm}\label{0-tran-Gamma(M)}
Let $(M,\Omega)$ be an $0$-transitive monoid with $M\omega_0 = \{\omega_0\}$ and let $J$ be the unique minimal non-zero $\J$-class of $M$.
The following items hold:
\begin{enumerate}
\item If the rank of $J$ is equal to $2$ then the graph $\Gamma(M)$ is a star graph with the star vertex $\omega_0$.
\item The augmentation submodule $\Aug(\mathbb{F}\Omega)$ is simple, for a field $\mathbb{F}$, if and only if the rank of $J$ is equal to $2$ and $M$ satisfies condition (4) of Theorem~\ref{aug-simple-monoids}.
\item If the augmentation submodule $\Aug(\mathbb{F}\Omega)$ is simple, for a field $\mathbb{F}$, then the maximal subgroup of $J$ is trivial.
\end{enumerate}

\end{thm}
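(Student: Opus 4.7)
The plan is to reduce all three parts to the main characterization in Theorem~\ref{aug-simple-monoids}, exploiting that the fixed point $\omega_0$ lies in $e\Omega$ for every $e \in E(J)$ and is fixed by every element of $G_e$.

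For part (1), I would first note that $e\omega_0 = \omega_0$ forces $\omega_0 \in e\Omega$ for every $e \in E(J)$, so if $\rk(J) = 2$ then $e\Omega = \{\omega_0, v_e\}$ with $v_e \in \Lambda$, and consequently every edge of $\Gamma(M)$ must contain $\omega_0$. Conversely, since $J \cup \{0\}$ acts $0$-transitively on $\Omega$, each $v \in \Lambda$ lies in some $j\Omega$ with $j \in J$; picking the idempotent $e \in E(J)$ with $e \R j$ gives $e\Omega = j\Omega \ni v$, whence $ev = v$ and $\{\omega_0, v\} \in E(\Gamma(M))$. Thus $\Gamma(M)$ is exactly the star graph centered at $\omega_0$.

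The key observation driving parts (2) and (3) is that, because every $g \in G_e$ fixes $\omega_0$, the assignment $v \mapsto v - \omega_0$ yields an isomorphism of $\mathbb{F}G_e$-modules from the \emph{permutation} module $\mathbb{F}(e\Omega \setminus \{\omega_0\})$ onto $\Aug(\mathbb{F}e\Omega)$. But a permutation module $\mathbb{F}X$ of any group over any field is simple only when $\abs{X} = 1$: whenever $\abs{X} \geq 2$, either the line spanned by $\sum_{x \in X} x$ (when it is nonzero) or the augmentation submodule (when it is not) provides a nonzero proper submodule. Consequently condition (3) of Theorem~\ref{aug-simple-monoids} forces $\abs{e\Omega} = 2$, i.e.\ $\rk(J) = 2$; moreover in that case $G_e$ acts trivially on the two-element set $e\Omega$, and via the factorization $g = g|_{e\Omega} \circ e$ valid for $g \in G_e$, we conclude $G_e = \{e\}$ is trivial.

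Part (2) then follows by reading Theorem~\ref{aug-simple-monoids} in both directions: forward, simplicity of $\Aug(\mathbb{F}\Omega)$ yields conditions (3) and (4) of that theorem, and the previous paragraph converts (3) into $\rk(J) = 2$; reverse, assuming $\rk(J) = 2$ and condition (4), the five hypotheses are all easily verified --- (1) because $I(M) = \{0\}$ is a constant map, (2) by the standing hypothesis, (3) because $\Aug(\mathbb{F}e\Omega)$ is $1$-dimensional with $G_e$ trivial, (4) by assumption, (5) by part (1). Part (3) is then an immediate corollary. The step requiring most care is the identification of $\Aug(\mathbb{F}e\Omega)$ with a permutation (rather than augmentation) module of $G_e$, which is what simultaneously forces the rank to collapse to $2$ and the maximal subgroup to trivialize.
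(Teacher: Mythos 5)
Your proposal is correct and follows essentially the same route as the paper: part (1) by analyzing rank-$2$ idempotents fixing $\omega_0$, part (2) by combining Theorem~\ref{aug-simple-monoids} with the identification $\Aug(\mathbb{F}e\Omega)\cong\mathbb{F}[e\Omega\setminus\{\omega_0\}]$ (which the paper cites from the transformation monoids reference and you prove directly) and the observation that a permutation module on two or more points is never simple, and part (3) from faithfulness of $G_e$ on the two-point set $e\Omega$. The only difference is that you spell out the verification of conditions (1), (2), (3), (5) in the reverse direction of part (2), which the paper leaves implicit.
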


\begin{proof}
(1) Let $\alpha\in\Omega\setminus\{\omega_0\}$. Since $J$ is $0$-transitive, there exists an element $m\in J$ such that $m\alpha=\alpha$. There exists an idempotent $e\in E(J)$ such that $e\R m$. It follows that $e\alpha=\alpha$. Now, as $e\omega_0=\omega_0$, we have $\{\alpha,\omega_0\}\in E(\Gamma(M))$. Now, suppose that $\{\alpha,\beta\}\in E(\Gamma(M))$. Then, there is an idempotent $e\in E(J)$ such that $e\alpha=\alpha$ and $e\beta=\beta$. Since the rank of $e$ is equal to $2$ and $e\omega_0=\omega_0$, we have $\alpha=\omega_0$ or $\beta=\omega_0$. It follows that the graph $\Gamma(M)$ is the star graph with the star vertex $\omega_0$.

(2) Suppose that the augmentation submodule $\Aug(\mathbb{F}\Omega)$ is simple.
Let $e\in E(J)$. Since the augmentation submodule $\Aug(\mathbb{F}\Omega)$ is simple, by Theorem~\ref{aug-simple-monoids}, $\Aug(\mathbb{F}e\Omega)$ is a simple $\mathbb{F}G_e$-module.
Now, by~\cite[Section 7.3]{Ben-Transformation-Monoids-2010}, we have $\Aug(\mathbb{F}e\Omega)\cong \mathbb{F}[e\Omega\setminus \{\omega_0\}]$.
Hence, $\mathbb{F}[e\Omega\setminus \{\omega_0\}]$ is simple as an $\mathbb{F}G_e$-module.
As $\mathbb{F}[e\Omega\setminus \{\omega_0\}]$ is simple, we have $\abs{e\Omega\setminus \{\omega_0\}}=1$ and, thus, $\abs{e\Omega}=2$. It follows that the rank of $J$ is equal to $2$.

Now, suppose that the rank of $J$ is equal to $2$ and $M$ satisfies condition (4) of Theorem~\ref{aug-simple-monoids}. By part (1), the graph $\Gamma(M)$ is connected. Hence, by Theorem~\ref{aug-simple-monoids}, the augmentation submodule $\Aug(\mathbb{F}\Omega)$ is simple.

(3) Let $e\in E(J)$. By part (2), the rank of $J$ is equal to $2$. Since $G_e$ acts faithfully on $e\Omega$ and fixes $\omega_0$, from $\abs{e\Omega}=2$, we deduce $G_e$ is trivial.
\end{proof}

In the following example, there exists a $0$-transitive monoid $(M,\Omega)$ which the augmentation submodule $\Aug(\mathbb{F}\Omega)$ of $M$ is simple, for every field $\mathbb{F}$, and $M$ has an element with rank more than $2$.

\begin{example}\label{0-tran-aug}
Let $2\leq n$, $\Omega=\{\omega_0,\omega_1,\ldots,\omega_n\}$ and $$M=\langle \{m_1,m_k,m_{ij}\mid 1\leq k\leq n, 1\leq i,j\leq n\mbox{ and } i\neq j\}\rangle$$ where $m_1,m_k,m_{ij}$ are defined as follows:
\begin{align*}
m_1\omega_0&=\omega_0,m_1\omega_l=\omega_l,\mbox{ for all }1\leq l\leq n,\\
m_k\omega_0&=\omega_0,m_k\omega_k=\omega_0,m_k\omega_l=\omega_l,\mbox{ for all }1\leq l\leq n\mbox{ with }l\neq k\mbox{ and}\\
m_{ij}\omega_0&=\omega_0,m_{ij}\omega_i=\omega_j,m_{ij}\omega_l=\omega_l,\mbox{ for all }1\leq l\leq n\mbox{ with }l\neq i\mbox{ and }l\neq j.
\end{align*}
The augmentation submodule $\Aug(\mathbb{F}\Omega)$ of the $0$-transitive monoid $(M,\Omega)$ is simple, for every field $\mathbb{F}$.
\end{example}

\begin{proof}
Let $W$ be a submodule of $\Aug(\mathbb{F}\Omega)$ and $\sum c_p\omega_p\in W$ with $\sum c_p\omega_p\neq 0$.
Since $\sum c_p\omega_p\neq 0$, there exists an integer $1\leq i\leq n$ such that $c_i\neq 0$. Let $d=\sum c_p-c_i$.
We have
$$d\omega_0+c_i\omega_i=m_n\cdots m_{i+1}m_{i-1}\cdots m_1 \sum c_p\omega_p.$$
Since $\sum c_p\omega_p\in\Aug(\mathbb{F}\Omega)$, we have $\sum c_p=0$, and, thus, $d=-c_i$. It follows that $\omega_i-\omega_0\in W$. As  $\omega_j-\omega_0=m_{ij}(\omega_i-\omega_0)$, we have $\omega_j-\omega_0\in W$, for every $1\leq j\leq n$ with $j\neq i$. Now, as $\omega_{i_1}-\omega_{i_2}=(\omega_{i_1}-\omega_0)-(\omega_{i_2}-\omega_0)$, for every distinct elements $1\leq \omega_{i_1},\omega_{i_2}\leq n$, we have $\Aug(\mathbb{F}\Omega)=W$.
\end{proof}

\begin{thm}\label{meet-omega-0}
Let $(\Omega,\wedge)$ be a finite meet semilattice with minimum $\omega_0$ and let $M$ be its endomorphism monoid.
The augmentation submodule $\Aug(\mathbb{F}\Omega)$ is simple, for every field $\mathbb{F}$.
\end{thm}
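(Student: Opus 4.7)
The plan is to verify the hypotheses of Theorem~\ref{0-tran-Gamma(M)}(2): that $(M,\Omega)$ is $0$-transitive with $M\omega_0=\{\omega_0\}$, that the minimal non-zero $\J$-class $J$ has rank $2$, and that condition~(4) of Theorem~\ref{aug-simple-monoids} holds.

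The technical core is a rich family of rank-$2$ endomorphisms. For each pair $\alpha,\beta\in\Omega\setminus\{\omega_0\}$, define $g_{\alpha,\beta}\colon\Omega\to\Omega$ by $g_{\alpha,\beta}(x)=\beta$ if $x\geq\alpha$ and $g_{\alpha,\beta}(x)=\omega_0$ otherwise. Using the elementary fact that $x\wedge y\geq\alpha$ iff both $x\geq\alpha$ and $y\geq\alpha$, a short case analysis shows $g_{\alpha,\beta}$ preserves $\wedge$; since $\omega_0\not\geq\alpha$, it fixes $\omega_0$, and hence $g_{\alpha,\beta}\in M$. In particular $e_\alpha:=g_{\alpha,\alpha}$ is an idempotent of rank $2$ with $e_\alpha^{-1}(\alpha)=\alpha^\uparrow$ (the principal filter at $\alpha$) and $e_\alpha^{-1}(\omega_0)=\Omega\setminus\alpha^\uparrow$.

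Every endomorphism of the meet-semilattice-with-zero $(\Omega,\wedge,\omega_0)$ fixes $\omega_0$, so $M\omega_0=\{\omega_0\}$. For any $\alpha\in\Omega\setminus\{\omega_0\}$, the constant map at $\omega_0$ (the only constant map lying in $M$) together with the maps $g_{\alpha,\beta}$ for $\beta\in\Omega\setminus\{\omega_0\}$ witness $M\alpha=\Omega$. Thus $(M,\Omega)$ is $0$-transitive, and by the results of~\cite{Ben-Transformation-Monoids-2010} recalled at the start of this section, $I(M)$ consists solely of this zero map, and $M$ has a unique minimal non-zero $\J$-class $J$, which is regular. A rank-$1$ element of $M$ would have image $\{f(\omega_0)\}=\{\omega_0\}$ and so be the zero; hence every non-zero element of $M$ has rank at least $2$, and since $g_{\alpha,\beta}$ attains rank $2$, the rank of $J$ equals $2$.

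To establish condition~(4) we apply Proposition~\ref{GraGa2} with the semilattice order on $\Omega$. Choose any fixed $\beta_0\in\Omega\setminus\{\omega_0\}$ and define $\phi(\alpha)=\alpha^\uparrow=e_\alpha^{-1}(\alpha)$ for $\alpha\neq\omega_0$ and $\phi(\omega_0)=\Omega\setminus\beta_0^\uparrow=e_{\beta_0}^{-1}(\omega_0)$. Then $\alpha$ is the unique minimum of $\alpha^\uparrow$, and $\omega_0$ is the unique minimum of $\Omega\setminus\beta_0^\uparrow$, because it is the minimum of $\Omega$. Proposition~\ref{GraGa2} then yields condition~(4), and Theorem~\ref{0-tran-Gamma(M)}(2) delivers the simplicity of $\Aug(\mathbb{F}\Omega)$ over every field $\mathbb{F}$. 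The only step requiring genuine verification is that $g_{\alpha,\beta}$ is a meet-homomorphism; everything else assembles the previously developed machinery.
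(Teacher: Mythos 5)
Your proof is correct and follows essentially the same route as the paper: your $g_{\alpha,\beta}$ are exactly the paper's $f_{\alpha,\beta}$, your choice of $\phi$ coincides with the paper's, and both arguments conclude by combining Proposition~\ref{GraGa2} with Theorem~\ref{0-tran-Gamma(M)}(2). The only elision (shared with the paper and harmless) is the explicit check that the idempotents $e_\alpha$ lie in $E(J)$, which follows since $e_\alpha=g_{w,\alpha}\,f\,g_{\alpha,w}$ for any $f\in E(J)$ with image $\{w,\omega_0\}$, forcing $J_{e_\alpha}=J$ by minimality.
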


\begin{proof}
We have
$$M=\{f\colon \Omega\rightarrow\Omega \mid f(\alpha\wedge\beta)=f(\alpha)\wedge f(\beta)\mbox{ and }f(\omega_0)=\omega_0\}.$$
Let $\alpha, \beta\in\Omega\setminus\{\omega_0\}$.
We define the map $f_{\alpha,\beta}\colon \Omega\rightarrow\Omega$, for every $x\in\Omega$, as follows:\\
\begin{equation*}
f_{\alpha,\beta}(x)= \begin{cases}
\beta& \text{if}\ \alpha\leq x;\\
\omega_0& \text{otherwise}.
\end{cases}
\end{equation*}
Since $f_{\alpha,\beta}\in M$ and $f_{\alpha,\beta}(\alpha)=\beta$, $M$ is $0$-transitive with unique minimal non-zero $\J$-class $J$ of rank $2$.

We define the map $$\phi\colon \Omega\rightarrow \{ B\mid B= f^{-1}f(\omega), \mbox{ for some }\omega\in\Omega\mbox{ and }f\in E(J)\},$$ for every $\alpha\in\Omega$, as follows:
\begin{equation*}
\phi(\alpha)= \begin{cases}
f_{\alpha,\alpha}^{-1}f_{\alpha,\alpha}(\alpha)& \text{if}\ \alpha\neq\omega_0;\\
f_{\beta,\beta}^{-1}f_{\beta,\beta}(\omega_0)& \text{otherwise},
\end{cases}
\end{equation*}
where $\beta\neq \omega_0$.
As $\phi$ satisfies conditions of Proposition~\ref{GraGa2}, $M$ satisfies condition (4) of Theorem~\ref{aug-simple-monoids}. Therefore, by Theorem~\ref{0-tran-Gamma(M)}, the augmentation submodule $\Aug(\mathbb{F}\Omega)$ is simple.
\end{proof}

The symmetric inverse monoid $I_{\Lambda}$ on a set $\Lambda$ is the monoid of all partial injective mappings of $\Lambda$.  It is also known as the rook monoid~\cite{Solomonrook}.  Using the previous theorem, we can recover the following well-known result.

\begin{thm}\label{Inv-Lam}
Let $I_{\Lambda}$ be the monoid of all partial injective maps on $\Lambda$ and $\mathbb{F}$ be a field.
The $\mathbb{F}I_{\Lambda}$-module $\mathbb{F}\Lambda$ is simple.
\end{thm}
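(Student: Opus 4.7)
The plan is to realize $(I_\Lambda, \Omega)$, with $\Omega = \Lambda \cup \{\omega_0\}$ and the extended action sending undefined values to $\omega_0$, as the endomorphism monoid of a particular meet semilattice, and then invoke Theorems~\ref{meet-omega-0} and~\ref{partial-simple}. The semilattice to use is the one on $\Omega$ in which $\omega_0$ is the minimum and every pair of distinct elements of $\Lambda$ is incomparable; equivalently, $\alpha \wedge \alpha = \alpha$, $\alpha \wedge \omega_0 = \omega_0$, and $\alpha \wedge \beta = \omega_0$ for distinct $\alpha,\beta \in \Lambda$.

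First I would verify that the monoid $M$ of meet semilattice endomorphisms fixing $\omega_0$ coincides, under the standard extension, with $I_\Lambda$. Given $f \in M$, for distinct $\alpha,\beta \in \Lambda$ we have $f(\alpha) \wedge f(\beta) = f(\omega_0) = \omega_0$, so either one of $f(\alpha), f(\beta)$ equals $\omega_0$ or they are distinct elements of $\Lambda$; this says that the partial map $\tilde f \colon \Lambda \rightharpoonup \Lambda$ defined by $\tilde f(\alpha) = f(\alpha)$ when $f(\alpha) \in \Lambda$ (and undefined otherwise) is an element of $I_\Lambda$. Conversely, any $g \in I_\Lambda$, extended by $g(\alpha) = \omega_0$ when $g$ is undefined at $\alpha$ and $g(\omega_0) = \omega_0$, preserves meets: the only non-trivial case is $\alpha \neq \beta$ in $\Lambda$, and there injectivity of $g$ on its domain forces $g(\alpha) \wedge g(\beta) = \omega_0 = g(\alpha \wedge \beta)$.

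Having identified $(I_\Lambda, \Omega)$ with the endomorphism monoid of this finite meet semilattice with minimum $\omega_0$, Theorem~\ref{meet-omega-0} immediately gives that $\Aug(\mathbb{F}\Omega)$ is a simple $\mathbb{F}I_\Lambda$-module. Finally, Theorem~\ref{partial-simple} translates simplicity of $\Aug(\mathbb{F}\Omega)$ into simplicity of the partial transformation module $\mathbb{F}\Lambda$, completing the proof. (The degenerate cases $|\Lambda| \le 1$ are trivial, since then $\mathbb{F}\Lambda$ has dimension at most one.)

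There is no serious obstacle here; the only point requiring care is the verification that the meet-semilattice endomorphisms on $\Omega$ are exactly the partial injective maps on $\Lambda$ extended by sending undefined values to $\omega_0$, which is a short direct check as sketched above.
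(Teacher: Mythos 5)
Your proposal is correct and is essentially identical to the paper's own proof: the paper also realizes $(I_\Lambda,\Omega)$ as the endomorphism monoid of the meet semilattice on $\Omega$ in which $\omega_0$ is the minimum and distinct elements of $\Lambda$ are incomparable, verifies $I_\Lambda=\End(\Omega)$ by the same injectivity-versus-meet computation, and then cites Theorems~\ref{meet-omega-0} and~\ref{partial-simple}. No differences worth noting.
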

\begin{proof}
Let $\Omega=\Lambda\cup\{\omega_0\}$.
We define the meet semilattice $(\Omega,\wedge)$ as follows:
$$\alpha\leq\beta\mbox { if and only if }\alpha=\omega_0,$$
for every $\alpha,\beta\in\Omega$. The element $\omega_0$ is the minimum.  Viewing $I_{\Lambda}$ as a submonoid of $T_{\Omega}$, we show that $I_{\Lambda}=\End(\Omega)$.
Suppose that there exists $f\in \End(\Omega)$ with $f\not\in I_{\Lambda}$. Then there exist elements
$\alpha\neq\beta\in \Lambda$ such that $f(\alpha)=f(\beta)\neq\omega_0$. We then have $f(\alpha)=f(\alpha)\wedge f(\beta)=f(\alpha\wedge\beta)=f(\omega_0)=\omega_0$, a contradiction. Conversely, if $f\in I_{\Lambda}$, then $f(\alpha\wedge \beta)=f(\alpha)\wedge f(\beta)$ for all $\alpha,\beta\in \Omega$ because either $\alpha=\beta$ or both sides are $\omega_0$.  Thus $I_{\Lambda}= \End(\Omega)$. Now, by Theorem~\ref{meet-omega-0}, the augmentation $\Aug(\mathbb{F}\Omega)$ is simple. Hence, by Theorem~\ref{partial-simple}, $\mathbb{F}\Lambda$ is simple.
\end{proof}

\begin{cor}
Let $PT_{\Lambda}$ be the monoid of all partial transformations on $\Lambda$ and $\mathbb{F}$ be a field.
Then the module $\mathbb F\Lambda$ is simple.
\end{cor}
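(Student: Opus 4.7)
The plan is to deduce this corollary directly from Theorem~\ref{Inv-Lam} by means of restriction of scalars. The symmetric inverse monoid $I_{\Lambda}$ is a submonoid of $PT_{\Lambda}$, and the canonical partial transformation module $\mathbb{F}\Lambda$ carries compatible actions of both: the $\mathbb{F}I_{\Lambda}$-module structure is obtained from the $\mathbb{F}PT_{\Lambda}$-module structure by restriction along the inclusion $\mathbb{F}I_{\Lambda}\hookrightarrow \mathbb{F}PT_{\Lambda}$.

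First I would observe that under such a restriction, every $\mathbb{F}PT_{\Lambda}$-submodule of $\mathbb{F}\Lambda$ is automatically an $\mathbb{F}I_{\Lambda}$-submodule, because stability under a larger collection of operators implies stability under any subcollection. Consequently, if $W\subseteq \mathbb{F}\Lambda$ is a non-zero $\mathbb{F}PT_{\Lambda}$-submodule, then $W$ is a non-zero $\mathbb{F}I_{\Lambda}$-submodule. By Theorem~\ref{Inv-Lam}, the $\mathbb{F}I_{\Lambda}$-module $\mathbb{F}\Lambda$ is simple, so $W=\mathbb{F}\Lambda$. Thus $\mathbb{F}\Lambda$ has no proper non-zero $\mathbb{F}PT_{\Lambda}$-submodules, which is precisely simplicity.

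There is essentially no obstacle here; the corollary is a formal consequence of simplicity passing up along monoid inclusions. If instead one preferred an intrinsic argument using the framework of Section~4, one could apply Theorem~\ref{0-tran-Gamma(M)} to the $0$-transitive action of $PT_{\Lambda}$ on $\Omega=\Lambda\cup\{\omega_0\}$: the unique minimal non-zero $\J$-class consists of the rank two maps that collapse all of $\Lambda\setminus\{\lambda\}$ to $\omega_0$ and send $\lambda$ to some element of $\Lambda$, and the associated incidence matrix has rank $\abs{\Omega}$ because the singleton kernel classes $\{\lambda\}$ and $\{\omega_0\}$ already appear. But the restriction-of-scalars argument above is shorter and avoids any repetition of the verification already carried out in the proof of Theorem~\ref{Inv-Lam}.
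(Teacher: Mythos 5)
Your restriction-of-scalars argument is exactly the paper's proof: the authors simply note that $I_{\Lambda}\subseteq PT_{\Lambda}$ and invoke Theorem~\ref{Inv-Lam} (together with Theorem~\ref{partial-simple}), since any $\mathbb{F}PT_{\Lambda}$-submodule of $\mathbb{F}\Lambda$ is in particular an $\mathbb{F}I_{\Lambda}$-submodule. Your write-up is correct and takes the same route, just spelled out in more detail.
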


\begin{proof}
Since $I_{\Lambda}\subseteq PT_{\Lambda}$, the result follows from Theorems~\ref{partial-simple} and~\ref{Inv-Lam}.
\end{proof}

We now use Rees matrix semigroups to show that the incidence matrix in condition (4) of Theorem~\ref{aug-simple-monoids} can be essentially arbitrarily complicated in the $0$-transitive case.  We use here Rees matrix semigroups.

\begin{thm}\label{Rees-Simple}
Let $\Lambda=\{1,\ldots, n\}$ and let $\Omega=\Lambda\cup\{\omega_0\}$ with $\omega_0\not\in\Lambda$.
Let $A$ be an $n\times r$ matrix over $\{0,1\}$ with no zero columns and no equal columns and let $M=M'\cup\{1_{\Lambda}\}$ where $M'$ is the Rees matrix semigroup $\mathcal{M}^{0}(\{1\},n,r;A^{T})$.
Define the transformation monoid $(M,\Omega)$ as follows:
\begin{align*}
(i,1,j)\alpha&=\begin{cases}
i& \text{if}\ \alpha\neq\omega_0\ \text{and}\ A_{\alpha j}\neq 0;\\
\omega_0& \text{otherwise},
\end{cases}\\
1_{\Lambda}\alpha&=\alpha,\\
0\alpha&=\omega_0,
\end{align*}
for every $\alpha\in\Omega$ and $(i,1,j)\in M'$.
Then the augmentation submodule $\Aug(\mathbb{F}\Omega)$ is simple if and only if $\rank(A)=n$, for a field $\mathbb{F}$. Moreover, if $\rank(A)=n$ then $M$ is $0$-transitive.
\end{thm}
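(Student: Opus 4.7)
The plan is to invoke Theorem~\ref{0-tran-Gamma(M)}(2), which for a $0$-transitive monoid reduces simplicity of $\Aug(\mathbb{F}\Omega)$ to two conditions: the unique minimal non-zero $\J$-class $J$ has rank~$2$, and condition~(4) of Theorem~\ref{aug-simple-monoids} holds. I would first verify the monoid-theoretic structure, and then the main technical step will be an explicit rank computation translating condition~(4) into the statement $\rank(A)=n$.

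First I would carry out the routine structural checks: $(M,\Omega)$ is a well-defined faithful transformation monoid (faithfulness uses that no two columns of $A$ are equal), the multiplication on $M$ agrees with the Rees matrix operation, every non-zero element $(i,1,j)$ has image $\{i,\omega_0\}$ and hence rank~$2$, and $I(M)=\{0\}$. Under $\rank(A)=n$ every row of $A$ must be non-zero (else all columns of $A$ lie in a coordinate hyperplane, forcing $\rank(A)\le n-1$), so together with the no-zero-columns hypothesis $A^T$ has a non-zero entry in each row and column; hence $M'$ is a regular Rees matrix semigroup, $J:=M'\setminus\{0\}$ is a single regular $\J$-class of rank~$2$, and it is the unique minimal non-zero $\J$-class of $M$. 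Moreover $M$ is $0$-transitive, because for $\alpha\in\Lambda$ one can pick any $j$ with $A_{\alpha j}=1$ and then $(i,1,j)\alpha=i$ for every $i\in\Lambda$, giving $M\alpha=\Omega$. Conversely, if $\Aug(\mathbb{F}\Omega)$ is simple then Lemma~\ref{tran-0tran}, together with $\omega_0$ being a fixed point of $M$, forces $M$ to be $0$-transitive, so no row of $A$ is zero and the same structural conclusions apply.

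For the core rank computation, note that $(i,1,j)^2=(i,A_{ij},j)$, so the idempotents of $J$ are exactly the $(i,1,j)$ with $A_{ij}=1$, and for such an idempotent the two non-trivial fibers are $C_j:=\{\alpha\in\Lambda: A_{\alpha j}=1\}$ and $\{\omega_0\}\cup(\Lambda\setminus C_j)$; since no column of $A$ is zero, both fibers appear for each $j$. The resulting $\Omega\times\mathcal{E}$ incidence matrix takes the block form
\[
I(\mathcal{E})=\begin{bmatrix} A & J_{n,r}-A \\ \mathbf{0}^T & \mathbf{1}^T \end{bmatrix},
\]
where $J_{n,r}$ is the all-ones $n\times r$ matrix and the last row is indexed by $\omega_0$. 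Adding column $j$ of the left block to column $j$ of the right block for each $j$ is rank-preserving and turns the right block into $r$ copies of the all-ones column; discarding duplicates leaves
\[
N=\begin{bmatrix} A & \mathbf{1}_n \\ \mathbf{0}^T & 1 \end{bmatrix}.
\]
A nontrivial linear relation among the rows of $N$ amounts to scalars $(c_\alpha)_{\alpha\in\Lambda}$ and $c_0$, not all zero, with $\sum_\alpha c_\alpha A_\alpha=0$ and $\sum_\alpha c_\alpha+c_0=0$, which exists iff the rows of $A$ are linearly dependent. Hence $\rank I(\mathcal{E})=\rank N=n+1=|\Omega|$ exactly when $\rank(A)=n$, and Theorem~\ref{0-tran-Gamma(M)}(2) then delivers both directions of the equivalence. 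The only nuisance is the degenerate case $n=1$, where $\Aug(\mathbb{F}\Omega)$ is one-dimensional (hence automatically simple) and $\rank(A)=1$ follows from the existence of any non-zero column, so the equivalence holds trivially in that case.
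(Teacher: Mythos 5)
Your proof is correct and follows essentially the same route as the paper: verify that $M'\setminus\{0\}$ is the unique minimal non-zero $\mathscr J$-class of rank $2$, identify the incidence matrix of the fiber system as $\left[\begin{smallmatrix} A & \overline{A}\\ 0 & 1\end{smallmatrix}\right]$, show its rank is $n+1$ exactly when $\rank(A)=n$, and conclude via Theorem~\ref{0-tran-Gamma(M)}(2). The only (harmless) divergence is in the converse, where you obtain $0$-transitivity from Lemma~\ref{tran-0tran} and the fixed point $\omega_0$ to see that $A$ has no zero rows, whereas the paper identifies $J=M'\setminus\{0\}$ directly from the minimality of $J$ via the product $(i',1,j)(i,1,j)(\alpha,1,j')=(i',1,j')$ and then invokes condition (4) of Theorem~\ref{aug-simple-monoids}.
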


\begin{proof}
The fact that $A$ has no repeated columns or zero columns implies that $M$ acts faithfully on $\Omega$, as is readily checked.
First, suppose that $\rank(A)=n$. Hence, the matrix $A$ has no zero rows. Now, as the matrix $A$ has no zero columns,
the Rees matrix semigroup $M'$ is regular and so $M'\setminus \{0\}$ is a $\J$-class of $M$. Note that $(i,1,j)$ is an idempotent if and only if $A_{ij}\neq 0$. As there are no zero columns, for each $j\in \Lambda$, there is some idempotent of the form $(i,1,j)$.

Since $$\ker (i,1,j)=\{\{\alpha\mid A_{\alpha j}=0\}\cup\{\omega_0\},\{\beta\mid A_{\beta j}\neq 0\}\},$$ for every $(i,1,j)\in M'$, the matrix $A$ has no zero columns and $M'\setminus \{0\}$ is a $\J$-class of $M$, it follows that $M'\setminus \{0\}$ is the unique $0$-minimal ideal of $M$ and is of rank $2$.  We verify that $M$ is $0$-transitive.
Let $\alpha,\beta\in\Omega\setminus\{\omega_0\}$. Since the matrix $A$ has no zero columns, there exists an integer $1\leq j\leq r$ such that $A_{\alpha j}\neq 0$. Hence, $(\beta,1,j)\alpha=\beta$. Now, as $M\omega_0=\omega_0$, the monoid $M$ is $0$-transitive. Let $(i,1,j)\in E(J)$.
Since $$\ker (i,1,j)=\{\{\alpha\mid A_{\alpha j}=0\}\cup\{\omega_0\},\{\beta\mid A_{\beta j}\neq 0\}\},$$
and $(i,1,j)\omega_0=0$, the incidence matrix of the set system $$\{B\mid B= f^{-1}f\omega, \mbox{ for some }\omega\in\Omega\mbox{ and }f\in E(J)\}$$ is the matrix $\left[
\begin{array}{c|c}
    A & \overline{A} \\ \hline
    0,\ldots,0 & 1,\ldots,1
\end{array}
\right]$
where $\overline{A}=[\overline{A}_1\cdots \overline{A}_r]$. Since $\rank(A)=n$, the rank of the incidence matrix is equal to $n+1$. Therefore, by Theorem~\ref{0-tran-Gamma(M)}.(2), the augmentation submodule $\Aug(\mathbb{F}\Omega)$ is simple.

Now, suppose that $\Aug(\mathbb{F}\Omega)$ is simple. Note that $I(M)=\{0\}$.  Hence, by Theorem~\ref{aug-simple-monoids}, the monoid $M$ contains a unique minimal non-zero $\J$-class $J$ and moreover $J$ is regular. Obviously, $J\subseteq M'\setminus \{0\}$. Since $J$ is regular, there exists an idempotent
 $(i,1,j)\in E(J)$. Let $(i',1,j')\in M'$. Since the matrix $A$ has no zero columns, there exists an integer $1\leq \alpha\leq n$ such that $A_{\alpha j}\neq 0$. Also, since $(i,1,j)\in E(J)$, we have $A_{ij}\neq 0$. Hence, we have
\begin{eqnarray}\label{i-j}
(i',1,j)(i,1,j)(\alpha,1,j')=(i',1,j').
\end{eqnarray}
Since $I(M)=\{0\}$, it follows by minimality of $J$ that $(i',1,j')\in J$ and so $J=M'\setminus \{0\}$.
Let $(i,1,j)\in E(J)$.
Since $$\ker (i,1,j)=\{\{\alpha\mid A_{\alpha j}=0\}\cup\{\omega_0\},\{\beta\mid A_{\beta j}\neq 0\}\}$$ and $(i,1,j)\omega_0=0$,
the incidence matrix of the set system $$\{B\mid B= f^{-1}f\omega, \mbox{ for some }\omega\in\Omega\mbox{ and }f\in E(J)\}$$ is the matrix $\left[
\begin{array}{c|c}
    A & \overline{A} \\ \hline
    0,\ldots,0 & 1,\ldots,1
\end{array}
\right]$. By Theorem~\ref{aug-simple-monoids}, the rank of the incidence matrix is equal to $\abs{\Omega}=n+1$ over $\mathbb{F}$. Hence $\rank(A)=n$.
\end{proof}

\begin{rmk}
Theorem~\ref{Rees-Simple} can also be deduced from classical semigroup representation theory as per~\cite{RhodesZalc}.  Assuming that $A$ has no zero rows, one has that $M'\setminus \{0\}$ is a unique minimal $\mathscr J$-class with trivial maximal subgroup.  It therefore is the apex of a unique simple module.  The construction of that module in~\cite{RhodesZalc} is as a quotient of $\mathbb F\Lambda$ and the quotient is proper if and only if the matrix $A$ does not have rank $n$.  Since $\mathbb F\Lambda\cong \Aug(\mathbb F\Omega)$, this gives the desired conclusion.
\end{rmk}

\section*{Acknowledgments}
The first author was supported, in part, by CMUP (UID/MAT/00144/ 2013), which is funded by FCT (Portugal) with national (MCTES) and European structural funds through the programs FEDER, under the partnership agreement PT2020 and also partly supported by the FCT post-doctoral scholarship (SFRH/BPD/89812/2012) and the second author was supported by NSA MSP \#H98230-16-1-0047 and PSC-CUNY. This work was performed while the first author was visiting the City College of New York. He thanks the College for its warm hospitality.

\def\malce{\mathbin{\hbox{$\bigcirc$\rlap{\kern-7.75pt\raise0,50pt\hbox{${\tt
  m}$}}}}}\def\cprime{$'$} \def\cprime{$'$} \def\cprime{$'$} \def\cprime{$'$}
  \def\cprime{$'$} \def\cprime{$'$} \def\cprime{$'$} \def\cprime{$'$}
  \def\cprime{$'$} \def\cprime{$'$}


\end{document}